\newcommand{\fc}{\mathfrak {c}}
\newcommand{\fe}{\mathfrak {e}}
\newcommand{\fg}{\mathfrak {g}}
\newcommand{\fh}{\mathfrak {t}}
\newcommand{\ft}{\mathfrak {t}}
\newcommand{\fa}{\mathfrak {a}}
\newcommand{\fk}{\mathfrak {k}}
\newcommand{\fx}{\mathfrak {h}}
\newcommand{\fkc}{\mathfrak {k}^{\mathbb {C}}}
\newcommand{\fl}{\mathfrak {l}}
\newcommand{\fp}{\mathfrak {p}}
\newcommand{\fpc}{\mathfrak {p}^{\mathbb {C}}}
\newcommand{\fo}{\mathfrak {o}}
\newcommand{\fs}{\mathfrak {s}}
\newcommand{\fu}{\mathfrak {u}}
\newcommand{\fz}{\mathfrak {z}}
\newcommand{\fgc}{\mathfrak {g}^{\mathbb {C}}}
\newcommand{\si}{\sigma}
\newcommand{\thq}{\theta}
\newcommand{\sk}{\bigskip}
\newcommand{\lra}{\longrightarrow}
\newcommand{\pa}{{\rm P}}
\newcommand{\dy}{{\rm D}}
\newcommand{\co}{{\cal O}}
\newcommand{\De}{\Delta}
\newcommand{\Ga}{\Gamma}
\newcommand{\bsi}{\widehat{\sigma}}
\newcommand{\bth}{\widehat{\theta}}
\newcommand{\al}{\alpha}
\newcommand{\be}{\beta}
\newcommand{\ga}{\gamma}
\newcommand{\bsl}{\backslash}
\newcommand{\aut}{{\rm Aut}}
\newcommand{\inn}{{\rm Int}}
\newcommand{\Ad}{{\rm Ad}}
\newcommand{\ad}{{\rm ad}}
\newtheorem{proposition}{Proposition}[section]
\newtheorem{theorem}[proposition]{Theorem}
\newtheorem{corollary}[proposition]{Corollary}
\newtheorem{lemma}[proposition]{Lemma}
\newcommand{\bk}{\mathbb {K}}
\newcommand{\bz}{\mathbb {Z}}
\newcommand{\bc}{\mathbb {C}}
\newcommand{\bh}{\mathbb {H}}
\newcommand{\br}{\mathbb {R}}
\newcommand{\bct}{\mathbb {C}^\times}
\begin{document}

\begin{center}
{\large \bf Outer Automorphism Groups of
Simple Lie Algebras \\
and Symmetries of Painted Diagrams}
\end{center}

\sk

Meng-Kiat Chuah

Department of Mathematics, National Tsing Hua University,

Hsinchu 300, Taiwan.

chuah@math.nthu.edu.tw

\sk

Mingjing Zhang

Department of Mathematics, National Tsing Hua University,

Hsinchu 300, Taiwan.

mjzhang@math.nthu.edu.tw

\sk

\sk

\sk

\sk

\sk

\noindent {\bf Abstract: }

Let $\fg$ be a simple Lie algebra.
Let $\aut(\fg)$ be the group of all automorphisms on $\fg$,
and let $\inn(\fg)$ be its identity component.
The outer automorphism group of $\fg$
is defined as $\aut(\fg)/\inn(\fg)$.
If $\fg$ is complex and has Dynkin diagram $\dy$,
then $\aut(\fg)/\inn(\fg)$ is isomorphic to $\aut(\dy)$.
We provide an analogous result for the real case.
For $\fg$ real, we let $\fg$ be represented by
a painted diagram $\pa$. Depending on whether
the Cartan involution of $\fg$
belongs to $\inn(\fg)$, we show that
$\aut(\fg)/\inn(\fg)$ is isomorphic to
$\aut(\pa)$ or $\aut(\pa) \times \bz_2$.
This result extends to the outer automorphism groups
of all real semisimple
Lie algebras.

\sk

\sk

\noindent {\bf 2010 Mathematics Subject Classification:}

17B20, 17B40.

\sk

\sk

\noindent {\bf Keywords: }

outer automorphism group, simple Lie algebra,
Dynkin diagram, painted diagram.

\sk

\sk


\newpage

\section{Introduction} \label{intro}
\setcounter{equation}{0}

Let $\fg$ be a finite dimensional simple Lie algebra over
$\bk \in \{\bc, \br\}$.
Let $\aut(\fg)$ be the group of all automorphisms on $\fg$,
and let $\inn(\fg)$ be its identity component. The group
\[ \aut(\fg)/\inn(\fg) \]
is called the {\it outer automorphism group} of $\fg$.

Suppose that $\bk = \bc$. A Cartan subalgebra $\fh$ of $\fg$
leads to the root space decomposition $\fg = \fh + \sum_\De \fg_\al$.
The Dynkin diagram $\dy$ of $\fg$ represents a simple system
$\Pi \subset \De$. Let $\aut(\dy)$ denote its group of
diagram automorphisms.
Every $[\si] \in \aut(\fg)/\inn(\fg)$ has a representative $\si$
of finite order, and there exist corresponding
$\si$-stable $\fh$ and $\si$-stable $\Pi$
(this means $\si$ stabilizes $\sum_\Pi \fg_\al$).
Since $\fh$ is $\si$-stable, we obtain an action $\bsi$ on $\De$
by $\si \fg_\al = \fg_{\bsi \al}$.
The restriction of $\bsi$ to $\Pi$ leads to $\bsi \in \aut(\dy)$.
The following is a classical result;
see for example \cite[Thm.7.8]{kn}.

\begin{theorem}
Let $\fg$ be a complex simple Lie algebra.
We have a natural isomorphism
\[ \aut(\fg)/\inn(\fg) \cong \aut(\dy) \;,\;
[\si] \mapsto \bsi .\]
\label{class}
\end{theorem}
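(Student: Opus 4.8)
The plan is to realise $\aut(\dy)$ inside $\aut(\fg)$ as the stabiliser of a pinning and to show that this subgroup splits $\aut(\fg)$ over $\inn(\fg)$. Fix the Cartan subalgebra $\fh$, the simple system $\Pi\subset\De$, and Chevalley generators $e_i\in\fg_{\al_i}$, $f_i\in\fg_{-\al_i}$, $h_i=[e_i,f_i]$ for the $\al_i\in\Pi$; together these form a pinning. Let $A\subset\aut(\fg)$ be the subgroup of automorphisms $\si$ with $\si(\fh)=\fh$ and $\si(\{e_i:i\in\Pi\})=\{e_i:i\in\Pi\}$. Throughout I would use: $\inn(\fg)=\Ad(G)$ for $G$ the connected adjoint group; any two Cartan subalgebras of $\fg$ are $\inn(\fg)$-conjugate; the Weyl group $W=N_G(H)/H$ acts simply transitively on the simple systems of $\De$; the $\al_i$ form a basis of the character lattice $X^*(H)$ of the adjoint maximal torus, so $\bigcap_i\ker\al_i=\{e\}$ in $H$; and $\fg$ has the Serre presentation, its relations among $e_i,f_i,h_i$ depending only on the Cartan matrix $\bigl(\langle\al_i,\al_j^\vee\rangle\bigr)$.

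\textbf{Step 1: $A\cong\aut(\dy)$.} An element $\si\in A$ permutes the simple root spaces, hence induces $\bsi|_\Pi\in\aut(\dy)$, giving a homomorphism $A\to\aut(\dy)$. It is surjective: for $\bth\in\aut(\dy)$ the assignment $e_i\mapsto e_{\bth(i)}$, $f_i\mapsto f_{\bth(i)}$, $h_i\mapsto h_{\bth(i)}$ respects the Serre relations, because $\bth$ preserves the Cartan matrix, so it extends to an automorphism of $\fg$ lying in $A$ and inducing $\bth$. It is injective: if $\si\in A$ induces the identity on $\Pi$ then $\si(e_i)=e_i$; writing $\si(h_i)=c_ih_i$ and comparing $\si([h_i,e_i])=[\si h_i,\si e_i]$ gives $c_i=1$, after which $\si(f_i)=d_if_i$ with $\si([e_i,f_i])=[\si e_i,\si f_i]$ gives $d_i=1$, so $\si$ fixes a generating set and $\si=\id$.

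\textbf{Step 2: $\aut(\fg)=\inn(\fg)\cdot A$ and $\inn(\fg)\cap A=\{\id\}$.} For the intersection, if $\Ad(g)\in A$ then $\Ad(g)$ fixes $\fh$, so $g\in N_G(H)$; it permutes the simple root spaces, so its image in $W$ fixes $\Pi$ and is therefore trivial, i.e. $g\in H$; and $\Ad(g)(\{e_i\})=\{e_i\}$ forces $\al_i(g)=1$ for all $i$, so $g\in\bigcap_i\ker\al_i=\{e\}$. For the factorisation, let $\si\in\aut(\fg)$ be arbitrary. Since $\si(\fh)$ is a Cartan subalgebra, after composing with an inner automorphism we may assume $\si(\fh)=\fh$; then $\si(\Pi)$ is a simple system, and composing with the inner automorphism given by a suitable Weyl group element we may assume $\si(\Pi)=\Pi$, so $\si(e_i)=\la_i e_{\bth(i)}$ for some $\bth\in\aut(\dy)$ and $\la_i\in\bc^\times$. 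Finally choose $t\in H$ with $\al_{\bth(i)}(t)=\la_i^{-1}$ for every $i$ (possible as the $\al_j$ form a basis of $X^*(H)$ and $\bth$ permutes $\Pi$); then $\Ad(t)\circ\si$ fixes $\fh$ and sends $e_i$ to $e_{\bth(i)}$, so it lies in $A$, and $\si\in\inn(\fg)\cdot A$.

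\textbf{Conclusion.} By Step 2 the composite $A\hookrightarrow\aut(\fg)\twoheadrightarrow\aut(\fg)/\inn(\fg)$ is an isomorphism, and together with Step 1 this yields $\aut(\fg)/\inn(\fg)\cong A\cong\aut(\dy)$; tracing a class $[\si]$ through a factorisation $\si=n\cdot a$ with $n\in\inn(\fg)$, $a\in A$, identifies this isomorphism with $[\si]\mapsto\bsi$, and in particular shows $\bsi$ does not depend on the $\si$-stable $\fh$ and $\Pi$ used to define it. I expect the factorisation $\aut(\fg)=\inn(\fg)\cdot A$ in Step 2 to be the main obstacle: it is the step that must weave together conjugacy of Cartan subalgebras, simple transitivity of $W$ on simple systems, and the concluding torus correction, and it is where one uses in an essential way that $\fg$ is semisimple (so that the $\al_i$ already generate $X^*(H)$). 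The rest reduces to bookkeeping with the Serre presentation.
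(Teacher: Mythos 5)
The paper does not actually prove Theorem \ref{class}: it records it as a classical fact and refers to \cite[Thm.7.8]{kn}, so there is no in-paper argument to compare against. Your proof is correct and is essentially the standard one (close in spirit to Knapp's): realise $\aut(\dy)$ inside $\aut(\fg)$ as the stabiliser $A$ of a pinning, use the Serre presentation to identify $A$ with $\aut(\dy)$, and combine conjugacy of Cartan subalgebras, simple transitivity of the Weyl group on simple systems, and a torus correction to obtain $\aut(\fg)=\inn(\fg)\cdot A$ with $\inn(\fg)\cap A=\{\id\}$; all the ingredients you invoke are valid for the adjoint group (in particular, $X^*(H)$ is the root lattice, so the simple roots do form a basis and $\bigcap_i\ker\al_i=\{e\}$). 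Two small points are worth tightening, though neither is a real gap. First, in the injectivity part of Step 1 you write $\si(h_i)=c_ih_i$ before justifying that $\si(h_i)$ is proportional to $h_i$; the clean route is to note that $\si e_i=e_i$ forces $\bsi\al_i=\al_i$ for all $i$, hence $\si$ acts as the identity on $\fh$ (so $c_i=1$ for free), after which $\si f_i=d_if_i$ and $h_i=\si[e_i,f_i]=d_ih_i$ give $d_i=1$. Second, the theorem as stated defines $\bsi$ from a finite-order representative together with a $\si$-stable pair $(\fh,\Pi)$, whereas your isomorphism extracts $\bsi$ from a factorisation $\si=n\cdot a$; to match the statement you should observe that when $\fh$ and $\Pi$ are already $\si$-stable the inner correction $n$ can be taken in $\Ad(H)$, which does not alter the induced permutation of $\Pi$, so the two recipes agree and $\bsi$ is independent of the choices. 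With those remarks your argument is a complete, self-contained proof of the cited classical result.
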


Now let $\bk = \br$.
We assume that $\fg$ is a noncompact real form of a
complex simple Lie algebra $\fg^\bc$.
The outer automorphism groups were initially
studied in \cite{mu},
and more recently computed
on a case-by-case basis
and listed in a table for all $\fg$ \cite[Cor.2.15]{gu}.
This article adds more insight to these groups
by a uniform treatment in the spirit of Theorem \ref{class}.
It provides their realizations as
diagram automorphisms, together with the explicit isomorphisms.
Our result also extends to the outer automorphism groups
of all real semisimple Lie algebras.

Just as Dynkin diagrams represent complex simple Lie algebras,
we shall let painted diagrams represent real simple
Lie algebras $\fg$.
Let $\thq$ be a Cartan involution of $\fg$, and let
$\fg = \fk + \fp$ be its Cartan decomposition.
We always regard $\fpc$ as a $\fkc$-module by
the adjoint $\fkc$-representation $[\fkc,\fpc] \subset \fpc$.
If $\fh$ is a Cartan subalgebra of $\fk$, we write
$\fg^\bc = \fh^\bc + \sum_\De \fg_\al^\bc$,
where $\De \subset (\ft^\bc)^*$ are the restricted roots.
So $\dim \fg_\al^\bc = 1,2$.
If $\Pi_\fk \subset \De$ is a simple system of $\fk$,
we let $\Pi_\fp$ be the corresponding lowest roots
of the $\fkc$-module $\fpc$.
A {\it painted diagram} is a diagram $\pa$
whose vertices represent some roots in $\De$ with the usual edge
relation of Dynkin diagram,
and each vertex is of white or black color.
We say that $\pa$ represents $\fg$ if
there exist $\Pi_\fk \cup \Pi_\fp$ such that:
\begin{equation}
\begin{array}{cl}
\mbox{(a)} &
\mbox{white vertices represent $\Pi_\fk$,}\\
\mbox{(b)} &
\mbox{black vertices represent $\Pi_\fp$.}
\end{array}
\label{ree}
\end{equation}

There are unique positive integers $\{a_\al \;;\; \al \in \pa\}$
without nontrivial common factor such that $\sum_\pa a_\al \al = 0$.
These $a_\al$ are given in
Tables 1-2 of \cite[Ch.X-\S5]{he}.
By a theorem of Kac,
the painted diagrams which represent real simple Lie algebras
are obtained by imposing the condition
\[ r \sum_{\rm black} a_\al =2 \]
in Table $r$, for $r=1,2$ \cite[Thm.2.1,Cor.2.2]{tams}.
The painted diagrams appear in
\cite[Table 7]{ov}\cite[Figs.1-3]{tams} for all $\fg$.
For the reader's convenience, we reproduce
them in Section 2.

Let $\aut(\pa)$ denote diagram automorphisms which
preserve vertex colors.
Consider $\si \in\aut(\fg)$ of finite order
and commutes with $\thq$.
We extend it to a $\bc$-linear automorphism on $\fgc$.
There exists a $\si$-stable Cartan subalgebra $\fh$ of $\fk$
which has a $\si$-stable simple system $\Pi_\fk$.
Then the corresponding lowest roots $\Pi_\fp$ of $\fpc$
are also $\si$-stable.
By restricting $\si$ to the root spaces of $\Pi_\fk \cup \Pi_\fp$,
we obtain $\bsi \in \aut(\pa)$ by
$\si \fg_\al^\bc = \fg_{\bsi \al}^\bc$.
There exist $\{c_\al \in \bct \;;\; \al \in \pa\}$
and root vectors $\{X_\al \;;\; \al \in \pa\}$ such that
\begin{equation}
 \{X_\al \in \fg_\al^\bc \cap \fk^\bc \;;\;
\mbox{white } \al \in \pa\} \cup
\{X_\al \in \fg_\al^\bc \cap \fp^\bc \;;\;
\mbox{black } \al \in \pa\}
\;,\; \si X_\al = c_{\bsi \al} X_{\bsi \al} .
\label{repr}
\end{equation}
Here $\bct$ denotes the nonzero complex numbers.
We shall see in Lemma \ref{kim} that in fact $|c_\al| =1$.

The Cartan involution $\thq$ extends to a $\bc$-linear involution
on $\fg^\bc$. By Theorem \ref{class},
it leads to $\bth \in \aut(\dy)$.
We have \cite[Ch.IX,Thm.5.7]{he}
\begin{equation}
\begin{array}{c}
\mbox{order } \bth = 1 \; \Longleftrightarrow \;
\mbox{rank } \fg = \mbox{rank } \fk \; \Longleftrightarrow \;
\thq \in \inn(\fg) , \\
\mbox{order } \bth = 2 \; \Longleftrightarrow \;
\mbox{rank } \fg > \mbox{rank } \fk \; \Longleftrightarrow \;
\thq \not\in \inn(\fg) .
\end{array}
\label{ang}
\end{equation}

Let $r = \mbox{order } \bth$, and let $\bz_r = \bz/r \bz$.
For convenience, we write them as multiplicative groups
$\bz_1 \cong \{1\}$ and $\bz_2 \cong \{\pm 1\}$.
We now present the main result of this article.

\begin{theorem}
Let $\fg$ be a noncompact real form of a complex simple Lie algebra.
We have a natural isomorphism
\begin{equation}
 \aut(\fg)/\inn(\fg) \cong \aut(\pa) \times \bz_r \;,\;
[\si] \mapsto
\left\{
\begin{array}{cl}
(\bsi, 1) & \mbox{if } r=1 ,\\
(\bsi, \prod_\pa c_\al^{a_\al}) & \mbox{if } r=2.
\end{array}
\right.
\label{them}
\end{equation}
\label{main}
\end{theorem}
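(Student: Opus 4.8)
Write $\Psi\colon\aut(\fg)/\inn(\fg)\to\aut(\pa)\times\bz_r$ for the map of Theorem \ref{main}. I plan to establish, in turn, that $\Psi$ is well defined, that it admits a group-homomorphism lift $\Lambda$, and that it is injective, and then to deduce formally that it is an isomorphism. For well-definedness one argues exactly in the spirit of the complex case (Theorem \ref{class}), using two equivariant conjugacy facts. First, every class $[\si]$ has a representative of finite order commuting with $\thq$, and any two such are conjugate by an element of $\inn(\fg)$ that itself commutes with $\thq$ -- because the Cartan involutions of $\fg$ commuting with a fixed finite-order automorphism form a single orbit under its centralizer. Second, for a fixed such $\si$, any two $\si$-stable pairs $(\fh,\Pi_\fk)$ are conjugate by an element of the centralizer of $\si$ in $\inn(\fk)$, i.e.\ the $\si$-equivariant conjugacy of Cartan subalgebras and simple systems of the reductive algebra $\fk$. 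Carrying these conjugations through the construction, $\bsi\in\aut(\pa)$ is unchanged, just as inner automorphisms leave the induced diagram automorphism unchanged in the complex case. For the scalar attached when $r=2$: rescaling $X_\al\mapsto\mu_\al X_\al$ turns $c_\al$ into $\mu_{\bsi^{-1}\al}c_\al/\mu_\al$, and $\prod_\pa(\mu_{\bsi^{-1}\al}/\mu_\al)^{a_\al}=1$ since $a_{\bsi\al}=a_\al$; conjugating by an element $z$ centralizing $\si$ replaces the vector $\prod_\pa X_\al^{a_\al}$ -- a well-defined zero-weight vector in the symmetric algebra of $\fgc$, because $\sum_\pa a_\al\al=0$ -- by $z\bigl(\prod_\pa X_\al^{a_\al}\bigr)$, on which $\si$ has the same eigenvalue $\prod_\pa c_\al^{a_\al}$. (Recall $|c_\al|=1$ by Lemma \ref{kim}.)

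For the lift, I would fix a pinning of $\fgc$ adapted to the painted diagram: a Chevalley system of $\fkc$ relative to $\fh$, together with a lowest weight vector of each irreducible $\fkc$-summand of $\fpc$ (one per black vertex), all compatible with the conjugation of $\fgc$ with fixed algebra $\fg$. Each $\phi\in\aut(\pa)$ permutes this data, giving a homomorphism $\phi\mapsto\si_\phi$ from $\aut(\pa)$ into $\aut(\fg)$ with $\si_\phi X_\al=X_{\phi\al}$, so that all its $c_\al$ equal $1$ and $\Psi([\si_\phi])=(\phi,1)$. Since $\si_\phi$ preserves $\fk$ and $\fp$ it commutes with $\thq$, and since $\thq X_\al=\pm X_\al$ according to the colour of $\al$ one computes $\Psi([\thq])=(\id,(-1)^{\sum_{\mathrm{black}}a_\al})$, which equals $(\id,-1)$ when $r=2$ and lies in $\bz_1$ when $r=1$ (consistent with $\thq\in\inn(\fg)$). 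Hence $\Lambda(\phi,\ep):=\si_\phi\circ\thq^{\,\ep}$ is a homomorphism $\aut(\pa)\times\bz_r\to\aut(\fg)$ with $\Psi\bigl([\Lambda(x)]\bigr)=x$ for all $x$.

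For injectivity, suppose $[\si]$ lies in the kernel; by the first step we may take any finite-order $\thq$-commuting representative $\si$ with any $\si$-stable $(\fh,\Pi_\fk)$. Since $\bsi=\id$ and $\pa$ spans $(\fhc)^*$, $\si$ fixes $\fhc$ pointwise, so $\si X_\al=c_\al X_\al$ with $|c_\al|=1$. If $r=1$, then $\fhc$ is a Cartan subalgebra of $\fgc$, hence $\si=\exp(\ad h_0)$ with $h_0\in\fhc$; since $\si$ preserves $\fg$ and $\fg\cap\fhc=\fh$, in fact $h_0\in\fh\subset\fg$, so $\si\in\inn(\fg)$. If $r=2$, then $\prod_\pa c_\al^{a_\al}=1$ by hypothesis; each $\al$ is purely imaginary on $\fh$, and the image of $h\mapsto(\al(h))_{\al\in\pa}$ is the hyperplane $\{\sum_\pa a_\al x_\al=0\}$, so there is $h\in\fh$ with $e^{\al(h)}=c_\al^{-1}$ for all $\al\in\pa$; replacing $\si$ by $\exp(\ad h)\circ\si$ (still in $[\si]$, and inner because $h\in\fh$) we may assume $\si$ fixes $\fhc$ and every $X_\al$. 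Then $\si$ fixes a Chevalley system of $\fkc$, so $\si|_{\fkc}=\id$; consequently $\si|_{\fpc}$ is $\fkc$-linear and fixes a lowest weight vector of each irreducible summand, so $\si|_{\fpc}=\id$; therefore $\si=\id$ and $[\si]$ is trivial.

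Finally one combines these: $\Psi$ is injective, and $\pi:=[\,\cdot\,]\circ\Lambda$ is a homomorphism with $\Psi\circ\pi=\id$, so for any $[\si]$ we get $\Psi\bigl(\pi(\Psi[\si])\bigr)=\Psi[\si]$ and hence $\pi(\Psi[\si])=[\si]$ by injectivity; thus $\pi$ is a two-sided inverse of $\Psi$, and a bijection whose inverse is a homomorphism is an isomorphism. The extension to real semisimple $\fg$ then follows by decomposing into simple ideals and accounting for the ideals that an automorphism permutes. I expect the main obstacle to be the well-definedness step, specifically the assertion that the scalar $\prod_\pa c_\al^{a_\al}$ -- not merely $\bsi$ -- is an invariant of $[\si]$: this forces the two $\si$-equivariant conjugacy statements to be set up precisely enough that the conjugating elements fix the root data while commuting with $\si$, which is the technical core; a secondary difficulty is making the pinning in the second step stable under all of $\aut(\pa)$ as a group (not just element by element) and compatible with the real form, so that $\phi\mapsto\si_\phi$ is a genuine homomorphism.
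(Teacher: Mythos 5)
Your injectivity argument (correcting $\si$ by an explicit torus element $\exp(\ad h)$, $h\in\fh$, and then using rigidity of a Chevalley system of $\fkc$ plus irreducibility of $\fpc$) and your splitting $\Lambda$ are a genuinely different route from the paper, which instead deduces injectivity from extension results of \cite{tams} and gets surjectivity from the admissibility of markings. But there is a genuine gap at exactly the step you flag as the technical core: well-definedness of $\Psi$ on the quotient $\aut(\fg)/\inn(\fg)$. You assert that any two finite-order, $\thq$-commuting representatives of a class $[\si]$ are conjugate by a $\thq$-commuting element of $\inn(\fg)$. This is false: two representatives of the same class differ by multiplication with an arbitrary element of $\inn(\fg)$, not by a conjugation. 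Concretely, $\id$ and $\exp(\ad h)$ for $h\in\fh$ of finite order in the torus (or, when $r=1$, $\id$ and $\thq$ itself) are finite-order, $\thq$-commuting representatives of the trivial class, and they are not conjugate. The conjugacy facts you invoke (commuting Cartan involutions form one orbit under the centralizer of a fixed $\si$; $\si$-equivariant conjugacy of the pairs $(\fh,\Pi_\fk)$) only show that $\Psi(\si)$ is independent of the auxiliary choices for a \emph{fixed} $\si$; they say nothing about replacing $\si$ by $\si\tau$ with $\tau\in\inn(\fg)$.

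What is actually needed is the converse of your injectivity step: if $\si$ is represented by a marking and lies in $\inn(\fg)$, then $\bsi=\id$ and, for $r=2$, $\prod_\pa c_\al^{a_\al}=1$. This is real content — one must rule out that an inner automorphism induces a nontrivial color-preserving diagram symmetry or a nontrivial scalar — and it is exactly what the paper's Proposition \ref{onto} supplies (writing $\si=\Ad_t$ with $t$ normalizing a maximally compact Cartan subalgebra, so $\si|_\fk\in\inn(\fk)$ forces $d=1$ on the white vertices by Theorem \ref{class}, and triviality of $\Ad_t$ on the center $\fz$ of $\fk$ forces $d=1$ on the black vertices), together with \cite[Prop.8.2]{tams} for the $r=2$ scalar. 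With that converse in hand, well-definedness follows from your multiplicativity computation; without it, your formal wrap-up ($\Psi$ injective plus a section $\Lambda$ implies isomorphism) has nothing to stand on, since $\Psi$ is not yet a map on the quotient. A secondary, smaller gap is the existence of an $\aut(\pa)$-equivariant pinning compatible with the real form $\fg$ underlying $\phi\mapsto\si_\phi$: this is not automatic and is in effect outsourced by the paper to the admissibility machinery of \cite[Thm.1.3,(1.5)]{tams}.
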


A real simple Lie algebra is either a complex simple
Lie algebra regarded as real, or a real form of a complex simple
Lie algebra \cite[Thm.6.94]{kn}.
So a real semisimple Lie algebra $\fg$ is of the form
\[ \fg = \fc_1 + ...  + \fc_m + \fg_1 + ... + \fg_n ,\]
where $\fc_i$ and $\fg_j$ are simple ideals,
each $\fc_i$ is complex,
and each $\fg_j$ is a real form of a complex simple Lie algebra.
Let $\dy = \dy_1 \cup ... \cup \dy_m$
be the Dynkin diagram of $\fc_1 + ...  + \fc_m$,
and let $\pa = \pa_1 \cup ... \cup \pa_n$ be the painted diagram
of $\fg_1 + ... + \fg_n$. If $\fg_i$ is compact,
then $\pa_i$ is just the Dynkin diagram of
$\fg_i^\bc$ without black vertex.
Let $\thq_i$ be a Cartan involution of $\fg_i$, and
let $[\thq_i]$ denote its quotient in $\aut(\fg)/\inn(\fg)$,
as discussed in (\ref{ang}).
Then Theorem \ref{main} leads to the following.

\begin{corollary}
Using the above notations,
let $\fg = \fc_1 + ...  + \fc_m + \fg_1 + ... + \fg_n$
be a real semisimple Lie algebra. Then
\[ \aut(\fg)/\inn(\fg) \cong
(\aut(\dy) \times \bz_2^m) \times
(\aut(\pa) \times \langle [\thq_1], ..., [\thq_n] \rangle) . \]
\label{exte}
\end{corollary}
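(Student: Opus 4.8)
The plan is to reduce the semisimple case to the simple cases handled by Theorems \ref{class} and \ref{main}, using the canonical decomposition of $\fg$ into simple ideals. Every $\si \in \aut(\fg)$ permutes the minimal ideals $\fc_1, \dots, \fc_m, \fg_1, \dots, \fg_n$ and preserves their isomorphism classes; since whether a real simple Lie algebra is of complex type (i.e.\ admits an invariant complex structure, equivalently has non-simple complexification) is an isomorphism invariant, $\si$ cannot interchange a $\fc_i$ with a $\fg_j$. Hence the ideals $\fc := \fc_1 + \dots + \fc_m$ and $\fg' := \fg_1 + \dots + \fg_n$ are each $\si$-stable. As $\fg = \fc \oplus \fg'$ with $\fc$ and $\fg'$ sharing no simple constituent, restriction gives $\aut(\fg) = \aut(\fc) \times \aut(\fg')$ and likewise $\inn(\fg) = \inn(\fc) \times \inn(\fg')$, so
\[ \aut(\fg)/\inn(\fg) \cong \big(\aut(\fc)/\inn(\fc)\big) \times \big(\aut(\fg')/\inn(\fg')\big) , \]
and it suffices to identify each factor with the corresponding piece of the claimed product.

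For the complex factor, the starting point is the single-ideal identity $\aut(\fc_i)/\inn(\fc_i) \cong \aut(\dy_i) \times \bz_2$. One writes $\aut(\fc_i) = \aut_\bc(\fc_i) \sqcup \tau_i \aut_\bc(\fc_i)$, where $\tau_i$ is the conjugation of $\fc_i$ with respect to a split real form and $\aut_\bc$ denotes $\bc$-linear automorphisms; the $\bc$-linear part modulo $\inn(\fc_i)$ is $\aut(\dy_i)$ by Theorem \ref{class}, while $[\tau_i]$ generates a $\bz_2$ which can be taken to commute with a fixed pinned lift of $\aut(\dy_i)$. Passing to $\fc = \bigoplus_i \fc_i$, an automorphism may in addition permute the $\fc_i$ lying in a common isomorphism class; recording these permutations alongside the diagram automorphisms of the individual $\dy_i$ reconstitutes exactly $\aut(\dy)$ for $\dy = \dy_1 \cup \dots \cup \dy_m$, and since a conjugation may be inserted independently in each simple summand the classes $[\tau_i]$ contribute the factor $\bz_2^m$. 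This gives $\aut(\fc)/\inn(\fc) \cong \aut(\dy) \times \bz_2^m$.

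For the real factor, Theorem \ref{main} applied to each noncompact $\fg_j$ gives $\aut(\fg_j)/\inn(\fg_j) \cong \aut(\pa_j) \times \bz_{r_j}$, with $r_j \in \{1,2\}$ the order of the diagram automorphism attached to $\thq_j$ as in (\ref{ang}) and the cyclic factor generated by $[\thq_j]$; when $\fg_j$ is compact one uses instead the classical isomorphism $\aut(\fg_j)/\inn(\fg_j) \cong \aut(\dy_{\fg_j^\bc}) = \aut(\pa_j)$ (all vertices white) together with $[\thq_j] = 1$. Reassembling over $j$, the permutations of mutually isomorphic painted diagrams $\pa_j$ are absorbed into $\aut(\pa)$ for $\pa = \pa_1 \cup \dots \cup \pa_n$ — here one checks that the invariants $\bsi$ and $\prod_\pa c_\al^{a_\al}$ of (\ref{them}) behave functorially under color-preserving diagram symmetries — and the factors $\bz_{r_j}$ assemble into $\langle [\thq_1], \dots, [\thq_n] \rangle$. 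Hence $\aut(\fg')/\inn(\fg') \cong \aut(\pa) \times \langle [\thq_1], \dots, [\thq_n] \rangle$, and substituting both identifications into the displayed decomposition yields Corollary \ref{exte}.

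The routine parts are the reduction $\fg = \fc \oplus \fg'$ (given the intrinsic characterization of complex-type ideals) and the single-ideal statements, which are Theorems \ref{class} and \ref{main} together with their compact analogue. I expect the main obstacle to lie in the two identification steps above: one must verify that the natural maps $[\si] \mapsto \bsi$ and $[\si] \mapsto \prod_\pa c_\al^{a_\al}$ are compatible with the permutations of isomorphic simple ideals, so that the assembled outer automorphism group really organizes into the stated product of a diagram-symmetry group with the conjugation and Cartan-involution factors.
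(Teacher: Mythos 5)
Your overall route is the same as the paper's: decompose $\fg$ into simple ideals, observe that no automorphism can exchange a complex-type ideal $\fc_i$ with a real-form ideal $\fg_j$, apply the single-ideal results (Theorem \ref{class} with the extra $\bz_2$ for the $\fc_i$, Theorem \ref{main} for noncompact $\fg_j$, the compact case separately), and then account for permutations of mutually isomorphic ideals by a permutation group $\Ga$ identified with permutations of isomorphic diagrams, so that $\aut(\dy)\times\aut(\pa)=\prod\aut(\dy_i)\prod\aut(\pa_j)\rtimes\Ga$; this is exactly the paper's chain (\ref{aam})--(\ref{aaq}). One minor discrepancy: you generate the $\bz_2$ in $\aut(\fc_i)/\inn(\fc_i)$ by the class of a split-form conjugation rather than by a Cartan involution (compact-form conjugation). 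That class does give a valid complement to $\aut(\dy_i)$, but it differs from the Cartan involution's class by the class of the longest-element diagram automorphism, so it is not the generator named immediately after the corollary's statement.

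The genuine gap is precisely the step you flag as ``the main obstacle'' and then do not carry out: verifying that after reassembly one gets the \emph{direct} product $(\aut(\dy)\times\bz_2^m)\times(\aut(\pa)\times\langle[\thq_1],\dots,[\thq_n]\rangle)$. This would require the permutation part of $\aut(\pa)$ (resp.\ $\aut(\dy)$) to centralize $\langle[\thq_1],\dots,[\thq_n]\rangle$ (resp.\ $\bz_2^m$), and that fails in general: if $s$ swaps two isomorphic ideals $\fg_1\cong\fg_2$, then $s\thq_1 s^{-1}$ is a Cartan involution of $\fg_2$, so $[s][\thq_1][s]^{-1}=[\thq_2]\neq[\thq_1]$ whenever $[\thq_j]\neq 1$. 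Concretely, for $\fg=\fsl(3,\br)\oplus\fsl(3,\br)$ the component group of $\aut(\fg)$ is the wreath product $\bz_2\wr S_2$, nonabelian of order $8$, whereas the claimed right-hand side is $\bz_2\times(\bz_2\times\bz_2)\cong\bz_2^3$; the same phenomenon occurs for two isomorphic complex ideals and the $\bz_2^m$ factor. So the compatibility you hoped to verify does not hold in the direct-product form. Your argument (like the paper's own, which passes from (\ref{aap}) and (\ref{aaq}) to the conclusion without addressing the action of $\Ga$ on these extra factors) establishes the statement only when the products involving $\bz_2^m$ and $\langle[\thq_1],\dots,[\thq_n]\rangle$ are read as the natural semidirect (wreath-type) products, or under the additional hypothesis that the simple ideals are pairwise non-isomorphic.
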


Here $\bz_2^m$ is generated by a Cartan involution of
$\fc_1 + ...  + \fc_m$, which is conjugation with respect to
a maximally compact subalgebra.

In Section 2, we list the painted diagrams
of all real simple Lie algebras.
In Section 3, we prove Theorem \ref{main}
and Corollary \ref{exte}.

\sk

\noindent {\bf Acknowledgements }

This work is partially supported
by a research grant from the Ministry of Science and Technology
of Taiwan.
The reviewers provide valuable comments to improve the contents
and presentations of this article.


\newpage

\section{Painted Diagrams}
\setcounter{equation}{0}

For convenience of reference, we list
the painted diagrams of all real simple
Lie algebras $\fg$, reproduced from
\cite[Table 7]{ov}\cite[Figs.1-3]{tams}.
They represent $\fg$ by (\ref{ree}).

We use Cartan's notation
for the real forms of exceptional Lie algebras
\cite[Ch.10-\S6]{he}, for example
$\fe_{6(-14)}$ is the real form of $\fe_6$
with $\dim \fp - \dim \fk = -14$.

For the classical real forms, the sizes of the painted diagrams
in Figures 1, 2 and 3
are determined by the number of white vertices as follows:

$\fs \fu(p,q): (p-1,q-1)$,

$\fs \fo(2,2q+1), \fs \fo(2,2q): q$,

$\fs \fp(n,\br), \fs \fo^*(2n): n-1$,

$\fs \fo(2p,2q+1), \fs \fp(p,q), \fs \fo(2p,2q),
\fs \fo(2p+1,2q+1): (p,q)$,

$\fs \fl(2p+1,\br), \fs \fl(p,\bh), \fs \fl(2p,\br): p$.

\noindent For example in Figure 1, the painted diagram of $\fs \fu(p,q)$
has $p-1$ (resp. $q-1$) white vertices in the left (resp. right)
connected component, and so on.
The generic diagram of $\fs \fu(p,q)$ requires
$p> 1$ or $q>1$.

We regard $\fpc$ as $\fkc$-modules by the adjoint
representation $[\fkc,\fpc] \subset \fpc$.

$\;$

\sk

\begin{figure}[h]
{\qquad \includegraphics[width=13.2cm,height=4.8cm]{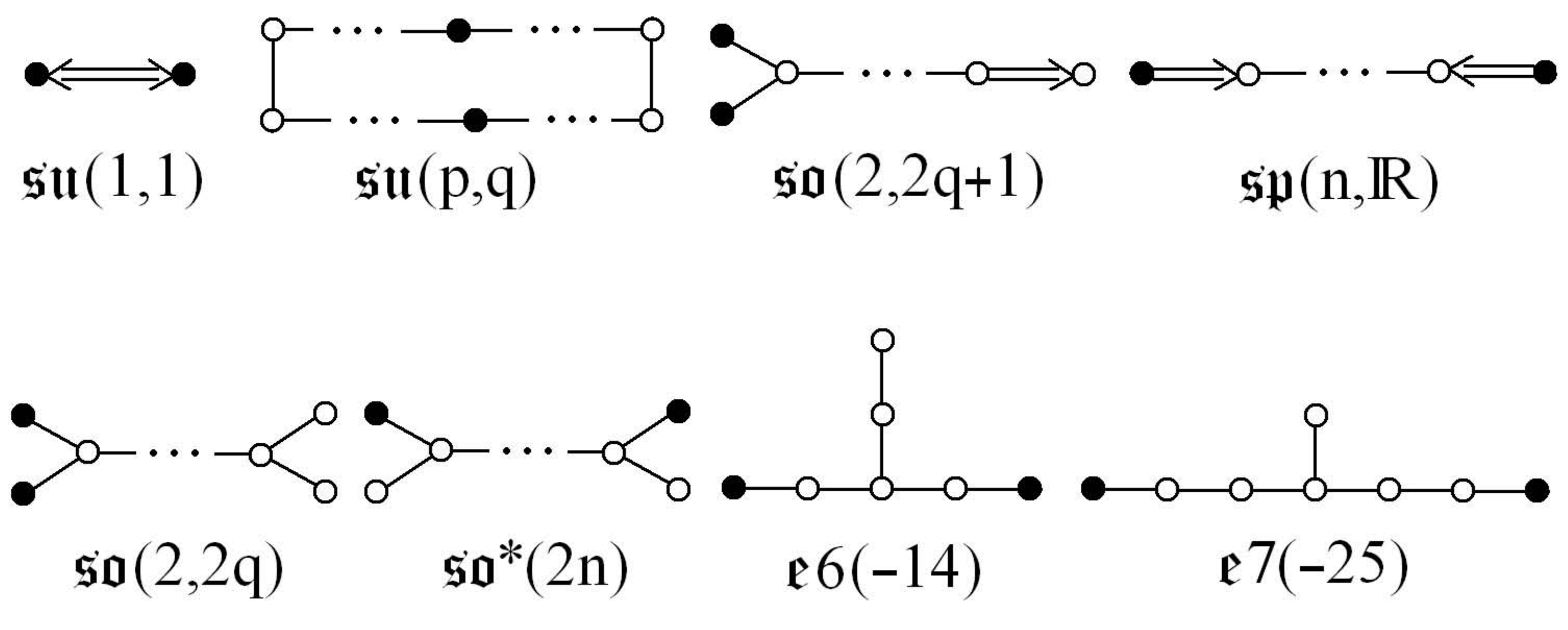}}
\caption{rank $\fg$ = rank $\fk$,
and $\fpc$ is the sum of two irreducible factors.}
\end{figure}

$\;$

\sk

\begin{figure}[h]
{\qquad \includegraphics[width=13.2cm,height=7.2cm]{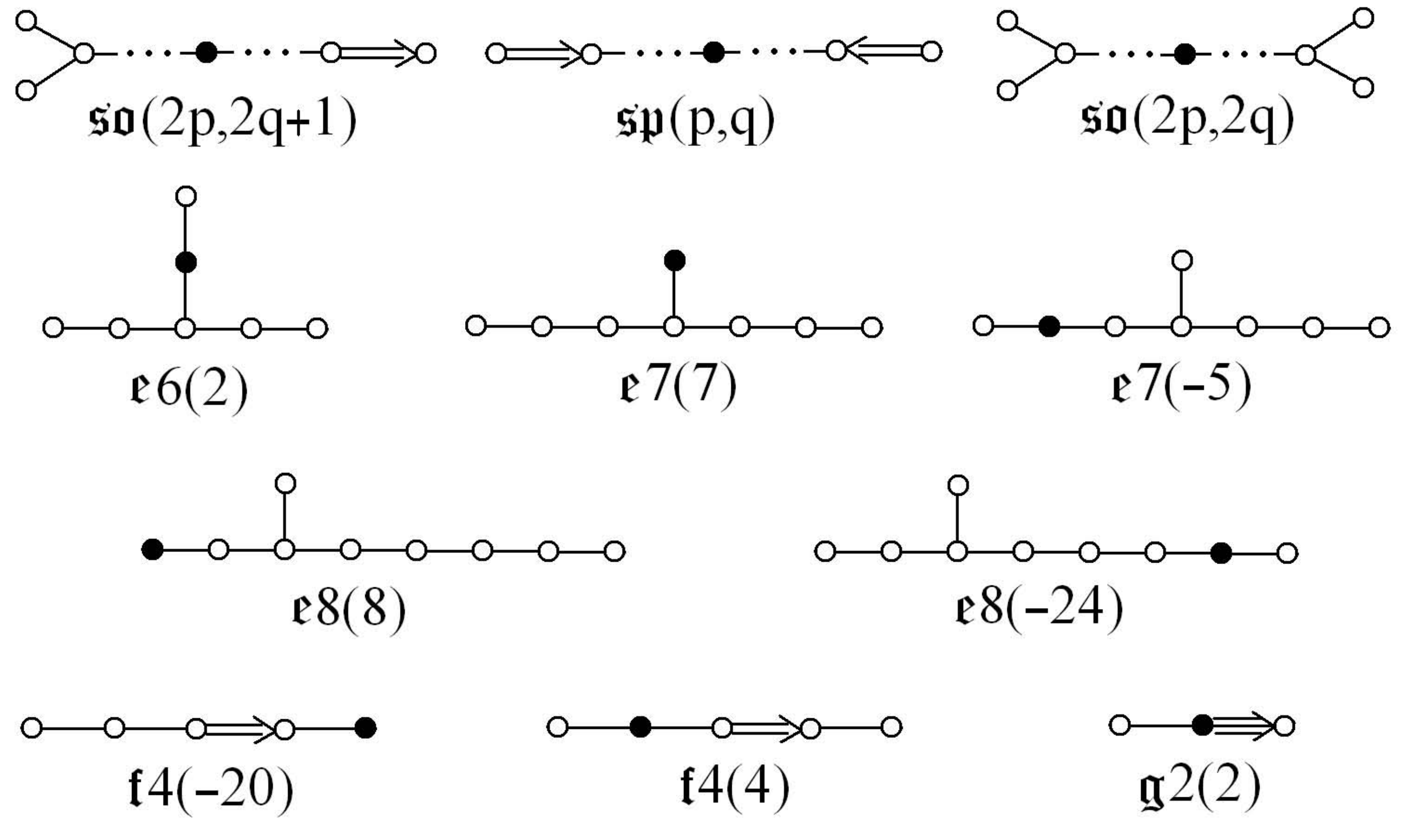}}
\caption{rank $\fg$ = rank $\fk$,
and $\fpc$ is irreducible.}
\end{figure}

$\;$

\sk

$\;$

$\;$

\sk

$\;$

$\;$

\sk

$\;$

$\;$

\sk

$\;$

\begin{figure}[h]
{\qquad \includegraphics[width=12.4cm,height=3.1cm]{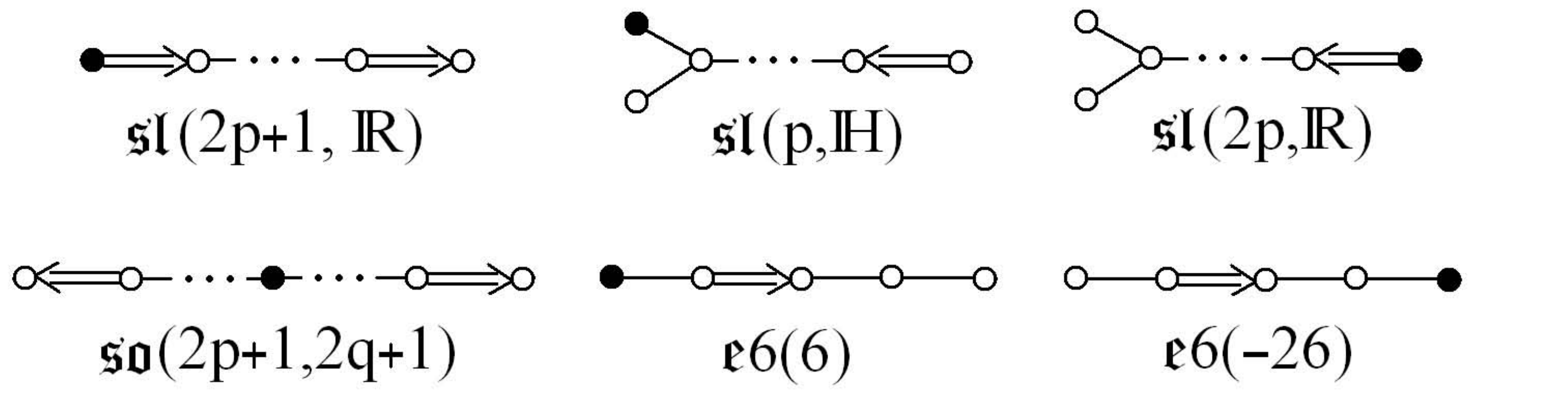}}
\caption{rank $\fg$ $>$ rank $\fk$,
and $\fpc$ is irreducible.}
\end{figure}

$\;$

$\;$

\sk

$\;$

$\;$

\sk

$\;$


\newpage

\section{Outer Automorphism Groups}
\setcounter{equation}{0}

In this section, we prove Theorem \ref{main} and Corollary \ref{exte}.
We first review the structures of
real simple Lie algebras and painted diagrams \cite{tams}.
Let $\fg$ be a noncompact real form of a complex simple Lie algebra $\fgc$.
Let $\thq$ be a Cartan involution, with
Cartan decomposition $\fg = \fk + \fp$.
Let $\aut(\fg)$ be the group of all automorphisms on $\fg$,
and let $\inn(\fg)$ be its identity component.

\begin{theorem}
{\rm \cite[Ch.IX,Thm.5.7]{he}}
We have $\thq \in \inn(\fg)$ if and only if
$\mbox{rank } \fg = \mbox{rank } \fk$.
\label{helg}
\end{theorem}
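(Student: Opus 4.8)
The plan is to prove the two implications separately, in each case identifying the Cartan involution $\thq$ with an explicit inner automorphism coming from a maximally compact Cartan subalgebra; throughout, $\ft$ denotes a maximal abelian subalgebra of $\fk$.

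For $\mbox{rank}\,\fg=\mbox{rank}\,\fk\ \Rightarrow\ \thq\in\inn(\fg)$, I would use the standard fact that rank equality makes $\ft$ itself a compact Cartan subalgebra of $\fg$ (extending $\ft$ to a $\thq$-stable Cartan subalgebra $\ft\oplus\fa$ with $\fa\subset\fp$, rank equality forces $\fa=0$). Then $\thq$ fixes $\ft^\bc$ pointwise, hence stabilizes each root space $\fg_\al^\bc$ and acts there by $+1$ if $\fg_\al^\bc\subset\fk^\bc$ and by $-1$ if $\fg_\al^\bc\subset\fp^\bc$. The point is that the Cartan bracket relations $[\fk^\bc,\fk^\bc]\subset\fk^\bc$, $[\fk^\bc,\fp^\bc]\subset\fp^\bc$, $[\fp^\bc,\fp^\bc]\subset\fk^\bc$ make the parity $\al\mapsto\ep(\al)\in\bz_2$ (with $\ep=0$ on compact and $\ep=1$ on noncompact roots) additive on all triples $\al,\be,\al+\be$ of roots, so an induction on height shows it is the restriction to $\De$ of the homomorphism $Q\to\bz_2$ prescribing $\ep$ on the simple roots. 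Since $\ft$ is compactly embedded, I can pick a real $H\in\ft$ with $\al_i(H)=\pi i\,\ep(\al_i)$ for all simple $\al_i$; then $e^{\ad H}$ fixes $\ft^\bc$ and acts on $\fg_\al^\bc$ by $(-1)^{\ep(\al)}$, so $e^{\ad H}=\thq$ and $\thq\in\inn(\fg)$.

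For the converse $\thq\in\inn(\fg)\ \Rightarrow\ \mbox{rank}\,\fg=\mbox{rank}\,\fk$, I would observe that conjugation by $\thq$ is a Cartan involution of the connected semisimple group $\inn(\fg)$ — under the identification $\ad(\fg)\cong\fg$ its differential is $\thq$ — so its fixed-point set $K$ is a connected compact subgroup with Lie algebra $\ad(\fk)$. Since conjugation by $\thq$ fixes $\thq$, the hypothesis places $\thq$ in $K$, and $\thq$ acts trivially on $\ad(\fk)$ (because $\thq X=X$ for $X\in\fk$), so by connectedness $\thq\in Z(K)$ and therefore lies in every maximal torus of $K$. Choosing the maximal torus whose Lie algebra is $\ad(\ft)$ and using that $\exp$ is onto a compact torus, I get $\thq=e^{\ad H}$ for some $H\in\ft$. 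Now $e^{\ad H}$ fixes the centralizer $\fz_\fg(\ft)$ pointwise, while $\thq$ is $-\id$ on $\fp$; hence $\fz_\fg(\ft)\cap\fp=0$, so $\fz_\fg(\ft)=\fz_\fk(\ft)=\ft$, which means $\ft$ is a Cartan subalgebra of $\fg$ and $\mbox{rank}\,\fg=\dim_\br\ft=\mbox{rank}\,\fk$.

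I expect the main obstacle to lie in the first implication: one must verify not only that the compact/noncompact parity extends to a character of the root lattice, but also that this character is realized by a genuinely real element of the compact Cartan subalgebra, so that the resulting automorphism really lies in $\inn(\fg)$ and not merely in $\aut(\fg^\bc)$. The converse rests instead on standard structural facts — that the fixed-point group of a Cartan involution of $\inn(\fg)$ is connected and compact with Lie algebra $\ad(\fk)$, and that a central element of a compact connected group meets every maximal torus — after which the argument is just bookkeeping with the Cartan decomposition and the root spaces.
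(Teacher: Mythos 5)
Your proof is correct. The paper itself gives no argument for this statement---it is quoted verbatim from Helgason \cite[Ch.IX, Thm.5.7]{he}---and what you have written is essentially the classical proof found there and in Knapp: in one direction you realize $\thq$ as $e^{\ad H}$ for $H$ in a compact Cartan subalgebra via the additivity of the compact/noncompact parity on the root lattice, and in the other you use that the fixed-point group of a Cartan involution of the adjoint group $\inn(\fg)$ is compact and connected to force $\thq$ into a maximal torus $\exp(\ad\,\ft)$, whence $\fz_\fg(\ft)\cap\fp=0$. The only step worth flagging for a careful reader is the connectedness of that fixed-point group, which holds because $\inn(\fg)$ has trivial center (Knapp, Thm.~6.31); with that reference in place the argument is complete.
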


Let $\fh$ be a Cartan subalgebra of $\fk$, with root system $\De$.
We have the root space decomposition
$\fgc = \ft^\bc + \sum_\De \fg_\al^\bc$.
If rank $\fg$ = rank $\fk$,
then $\dim \fg_\al^\bc =1$ for all $\al \in \De$.
If rank $\fg$ $>$ rank $\fk$, then either $\dim \fg_\al^\bc =1$
or $\dim (\fg_\al^\bc \cap \fkc) = \dim (\fg_\al^\bc \cap \fpc) =1$.

Let $\fg_\al = \fg \cap (\fg_\al^\bc + \fg_{-\al}^\bc)$, and
we have $\fg_\al = \fg_{-\al}$.
If $\De^+ \subset \De$ is a positive system, then
\[ \fg = \ft + \sum_{\De^+} \fg_\al .\]

Let $\fkc$ act on $\fpc$ by $[\fkc,\fpc] \subset \fpc$.
Each $\fg$ belongs to one of the following cases.
\begin{equation}
\begin{array}{cl}
\mbox{(a)} & \mbox{rank } \fg = \mbox{rank } \fk,
\mbox{and $\fpc$ is the sum of two irreducible factors.}\\
\mbox{(b)} & \mbox{rank } \fg = \mbox{rank } \fk,
\mbox{and $\fpc$ is irreducible.}\\
\mbox{(c)} & \mbox{rank } \fg > \mbox{rank } \fk,
\mbox{and $\fpc$ is irreducible.}
\end{array}
\label{tig}
\end{equation}
Write $\fk = [\fk, \fk] + \fz$, where
the commutator subalgebra $[\fk,\fk]$ is semisimple,
and $\fz$ is the center of $\fk$.
We have $\dim \fz = 1$ in (\ref{tig})(a),
and $\fz =0$ in (\ref{tig})(b,c).

A painted diagram is a diagram $\pa$
whose vertices are white or black, and they represent a subset
of $\De$ with the usual edge relation of Dynkin diagram.
We say that $\pa$ represents $\fg$ if (\ref{ree}) holds.
The painted diagrams in Figures 1, 2 and 3 represent
cases (\ref{tig})(a,b,c) respectively.

Let $\aut(\pa)$ denote automorphisms on $\pa$ which preserve vertex colors.
If $d \in \aut(\pa)$, then a $d$-orbit $\co \subset \pa$
can consist of 1, 2, 3 or 4 elements.
For example if $d$ fixes $\al$, then $\co = \{\al\}$ has one element.
If $\fg = \fs \fo(4,4)$, then Figure 2 shows that
$\co$ may have 4 elements.

Since $\pa$ represents $\fg$ (or equivalently $\thq$),
additional data on $\pa$ represent members of $\aut(\fg)$.
Let $S^1 \subset \bct$ be the unit circle.
We shall let
$d \in \aut(\pa)$ and $\{c_\al \in S^1\}_\pa$
 represent $\si \in \aut(\fg)$ by (\ref{repr}), where
$\bsi =d$. Here $\si$ is identified with its $\bc$-linear extension
to $\aut(\fgc)$.

\begin{lemma}
$\;$

\noindent {\rm (a)} $\,$ Suppose that $\dim \fg_\al^\bc =1$.
A vector space automorphism $\si$ on $\fg_\al^\bc + \fg_{-\al}^\bc$
stabilizes $\fg_{\pm \al}^\bc$ and $\fg_\al$
if and only if there exists $z \in S^1$ such that
$\si$ has eigenvalue $z^{\pm 1}$ on $\fg_{\pm \al}^\bc$.

\noindent {\rm (b)} $\,$ If $\dim \fg_\al^\bc =2$,
we get the same result by replacing $\fg_{\pm \al}^\bc$ and $\fg_\al$
of part (a) with $\fg_{\pm \al}^\bc \cap \fkc$ and $\fg_\al \cap \fk$,
or $\fg_{\pm \al}^\bc \cap \fpc$ and $\fg_\al \cap \fp$.

\noindent {\rm (c)} $\,$ Let $\si \in \aut(\fg)$.
If $\bsi \in \aut(\pa)$
and $\{c_\al \in \bct\}_\pa$ satisfy (\ref{repr}),
then $\prod_\co c_\al \in S^1$
for all $\bsi$-orbit $\co \subset \pa$.
We may replace with $\{c_\al' \in S^1\}_\pa$
and (\ref{repr}) remains satisfied.
\label{kim}
\end{lemma}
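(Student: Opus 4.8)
The plan is to handle the three parts of Lemma \ref{kim} in order, with parts (a) and (b) serving as linear-algebra preliminaries and part (c) the real content.

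\medskip

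\noindent\textbf{Parts (a) and (b).} First I would settle (a). The restriction of $\si$ to the one-dimensional space $\fg_\al^\bc$ is multiplication by some $z \in \bct$, and by the duality pairing between $\fg_\al^\bc$ and $\fg_{-\al}^\bc$ induced by the Killing form (which $\si$ preserves up to a scalar, but in fact preserves exactly since $\si$ has finite order, or more directly: $\si$ preserves the bracket $[\fg_\al^\bc,\fg_{-\al}^\bc] \subset \fh^\bc$ and fixes the relevant coroot because it fixes $\fh$ pointwise here), the eigenvalue on $\fg_{-\al}^\bc$ is forced to be $z^{-1}$. The condition that $\si$ stabilize the real form $\fg_\al = \fg \cap (\fg_\al^\bc + \fg_{-\al}^\bc)$: pick $X_\al \in \fg_\al^\bc$ with $X_\al - \bar\sigma_0(X_\al) \in \fg_\al$ where $\bar{\phantom{x}}$ denotes the conjugation of $\fgc$ with respect to $\fg$; the real structure on $\fg_\al^\bc + \fg_{-\al}^\bc$ is $X_\al \mapsto \overline{X_\al} \in \fg_{-\al}^\bc$ up to a positive scalar, and demanding $\si$ commute with this conjugation forces $\overline{z} = z^{-1}$, i.e. $|z|=1$, giving $z \in S^1$. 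Conversely $|z|=1$ clearly makes $\si$ commute with the conjugation, hence stabilize $\fg_\al$. Part (b) is the identical argument applied separately inside $\fkc$ and inside $\fpc$: when $\dim\fg_\al^\bc = 2$, the spaces $\fg_\al^\bc \cap \fkc$ and $\fg_\al^\bc \cap \fpc$ are one-dimensional, $\si$-stable (since $\si$ commutes with $\thq$), and each carries its own conjugation-duality with the $-\al$ counterpart, so the same computation gives a unit eigenvalue on each.

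\medskip

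\noindent\textbf{Part (c).} Now suppose $\si \in \aut(\fg)$, $\bsi \in \aut(\pa)$, and $\{c_\al \in \bct\}_\pa$ satisfy (\ref{repr}). Fix a $\bsi$-orbit $\co = \{\al_1, \ldots, \al_k\}$ with $\bsi \al_i = \al_{i+1}$ (indices mod $k$), so $\bsi^k$ fixes $\al_1$. Iterating (\ref{repr}) gives $\si^k X_{\al_1} = \big(\prod_{i=1}^{k} c_{\al_i}\big) X_{\al_1} = \big(\prod_\co c_\al\big) X_{\al_1}$, so $\si^k$ acts on the line $\fg_{\al_1}^\bc$ (or on $\fg_{\al_1}^\bc \cap \fkc$, resp.\ $\cap\,\fpc$, in the $\dim = 2$ case) by the scalar $\prod_\co c_\al$. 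Since $\si \in \aut(\fg)$ commutes with the conjugation of $\fgc$ with respect to $\fg$, so does $\si^k$; hence by part (a) (resp.\ part (b)) applied to the automorphism $\si^k$ of $\fg_{\al_1}^\bc + \fg_{-\al_1}^\bc$, its eigenvalue on $\fg_{\al_1}^\bc$ lies in $S^1$. Therefore $\prod_\co c_\al \in S^1$.

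\medskip

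\noindent\textbf{Replacing $c_\al$ by unit-modulus constants.} For each orbit $\co = \{\al_1,\ldots,\al_k\}$ we have $t := \prod_\co c_\al \in S^1$, and we want new values $c'_{\al_i} \in S^1$ realizing the same $\si$ via new root vectors $X'_{\al_i}$. The point is that rescaling the chosen root vector $X_{\al_i} \mapsto \lambda_i X_{\al_i}$ with $\lambda_i \in \bct$ changes $c_{\al_i}$ to $c_{\al_i}\lambda_{i+1}/\lambda_i$ (with $\lambda_{k+1} := \lambda_1$), leaving the product over the orbit unchanged. So I would set $c'_{\al_i} := |c_{\al_i}|^{-1} c_{\al_i}$ for $i \geq 2$ and $c'_{\al_1} := c_{\al_1} \prod_{i \geq 2}|c_{\al_i}|$; then each $c'_{\al_i} \in S^1$ (the last one because $|c'_{\al_1}| = |c_{\al_1}|\prod_{i\geq2}|c_{\al_i}| = |t| = 1$), and there exist scalars $\lambda_i$ — solve the telescoping system $\lambda_{i+1}/\lambda_i = c'_{\al_i}/c_{\al_i}$ for $i=1,\ldots,k-1$ starting from $\lambda_1 = 1$, and the consistency condition at $i=k$ holds automatically because both products equal $t$ — so that $X'_{\al_i} := \lambda_i X_{\al_i}$ satisfies $\si X'_{\al_i} = c'_{\al_{i+1}} X'_{\al_{i+1}}$. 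Doing this orbit by orbit over $\pa$ produces $\{c'_\al \in S^1\}_\pa$ and $\{X'_\al\}_\pa$ satisfying (\ref{repr}), and the new root vectors still lie in $\fg_\al^\bc \cap \fkc$ (white) or $\fg_\al^\bc \cap \fpc$ (black) since we only rescaled.

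\medskip

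\noindent\textbf{Main obstacle.} The only genuinely delicate point is pinning down, in parts (a)--(b), that the conjugation of $\fgc$ with respect to $\fg$ really does interchange $\fg_\al^\bc$ and $\fg_{-\al}^\bc$ (and preserves $\fkc$, $\fpc$), so that ``$\si$ stabilizes the real form'' translates cleanly into ``$\overline{z} = z^{-1}$.'' This is standard once one recalls that $\fh \subset \fk$ is chosen inside a maximally compact Cartan subalgebra and the restricted roots $\De$ take imaginary values on $\ft$, so complex conjugation sends $\al$ to $-\al$; I would state this as a one-line consequence of the setup recalled earlier in Section 3 rather than re-derive it.
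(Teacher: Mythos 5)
Your proof of part (c) is essentially the paper's: you iterate (\ref{repr}) around a $\bsi$-orbit to get $\si^{m}X_{\be}=\bigl(\prod_{\co}c_\al\bigr)X_\be$, apply parts (a)--(b) to $\si^{m}$, and then renormalize orbit by orbit. Your explicit telescoping rescaling of the root vectors (solving $\lambda_{i+1}/\lambda_i=c'_{\al_i}/c_{\al_i}$, with the wrap-around consistency coming from the invariance of the orbit product) is a self-contained substitute for the step the paper outsources to \cite[Thm.1.4,(5.9)(b)]{tams}, and it is correct.

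The real divergence, and the one genuine problem, is the forward direction of part (a). Lemma \ref{kim}(a) is stated for an arbitrary vector space automorphism of $\fg_\al^\bc+\fg_{-\al}^\bc$, but your derivation of the relation $w=z^{-1}$ between the two eigenvalues invokes the Killing-form pairing, preservation of the bracket into $\ft^\bc$, and finite order --- none of which is a hypothesis of (a). What stabilizing the real form $\fg_\al$ buys you by itself is only $w=\bar z$ (since the conjugation with respect to $\fg$ swaps $\fg_{\pm\al}^\bc$), and $w=\bar z$ alone does not force $|z|=1$; so as written your argument establishes (a) only for automorphisms that additionally preserve the pairing of $\fg_\al^\bc$ with $\fg_{-\al}^\bc$. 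The paper takes a different route that stays within linear algebra: it uses the action of the compact torus $T$ and the integrality of $\al$ to describe $\fg_\al$ concretely as in (\ref{wese}) (the $T$-orbit $\{e^{ix}X_\al+e^{-ix}X_{-\al}\}$, rather than just the fixed set of the conjugation), and reads the constraint $z=w^{-1}\in S^1$ directly off that description. Since the forward direction of (a) is only ever applied to $\si^{m}$ with $\si\in\aut(\fg)$, your Killing-form version does cover every use of the lemma in the paper, and in particular your part (c) is sound; but you should either prove (a) as stated via the paper's description of $\fg_\al$, or explicitly weaken the hypothesis of (a) to match what your argument actually uses.
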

\begin{proof}
Suppose that $\dim \fg_\al^\bc =1$.
Let $G$ be a connected Lie group whose Lie algebra is $\fg$,
and let $T$ be the connected subgroup of $G$ whose Lie algebra
is $\ft$. Let $\exp$ be the exponential maps $\ft \lra T$
and $i \br \lra S^1$.
The root $\al$ is integral, namely
there exists a group homomorphism
$\chi : T \lra S^1$ such that
$\chi(\exp X) = \exp(\al(X))$ for all $X \in \ft$.

There exist $X_{\pm \al} \in \fg_{\pm \al}^\bc$
such that $X_\al + X_{-\al} \in \fg_\al$.
For all $t \in T$, $\Ad_t$ stabilizes $\fg_\al$.
So $\Ad_t (X_\al + X_{-\al}) =
\chi(t) X_\al + (\chi(t))^{-1} X_{-\al} \in \fg_\al$.
Therefore,
\begin{equation}
 \fg_\al = \{ e^{ix} X_\al + e^{-ix} X_{-\al} \;;\;
x \in \br \} .
\label{wese}
\end{equation}

Let $\si$ be a vector space automorphism on
$\fg_\al^\bc + \fg_{-\al}^\bc$.
Suppose that $\si$ has eigenvalues $z$ and $w$ respectively
on $\fg_\al^\bc$ and $\fg_{-\al}^\bc$.
If $\si$ stabilizes $\fg_\al$, then (\ref{wese})
implies that $z = w^{-1} \in S^1$.
Conversely, if $\si$ has eigenvalues $z^{\pm 1} \in S^1$
on $\fg_{\pm \al}^\bc$, then (\ref{wese}) implies that
$\si$ stabilizes $\fg_\al$. This proves Lemma \ref{kim}(a).
The proof of Lemma \ref{kim}(b) is similar.

Finally we prove Lemma \ref{kim}(c).
Let $\si \in \aut(\fg)$.
Suppose that $d \in \aut(\pa)$ and
$\{c_\al \in \bct\}_\pa$ satisfy (\ref{repr})
for $\bsi = d$.
Let $\co$ be a $d$-orbit, and let $m$ be
its number of elements.
If $\be \in \co$, and $X_\be$ is as given in (\ref{repr}), then
\begin{equation}
\si^m X_\be = (\prod_{\al \in \co} c_\al) X_\be .
\label{jet}
\end{equation}
So $\si^m$ stabilizes $\bc X_\be$.
By applying Lemma \ref{kim}(a,b) to $\si^m$, we get
$\prod_{\al \in \co} c_\al \in S^1$.

We use the polar coordinates
$c_\al = r_\al \exp{i \thq_\al} \in \br^+ \times S^1$,
and let $c_\al' = \exp{i \thq_\al} \in S^1$.
Then $\prod_{\al \in \co} c_\al =
\prod_{\al \in \co} c_\al'$ for each $d$-orbit $\co$, and
$(c',d)$ is again related to $\si$ by (\ref{repr})
\cite[Thm.1.4,(5.9)(b)]{tams}.
This proves the lemma.
\end{proof}

\sk

The above lemma leads to the notion of markings on $\pa$
as a tool to study $\aut(\fg)$.
A {\it marking} on $\pa$ is a pair $(c,d)$, where
\begin{equation}
 d \in \aut(\pa) \;,\; c_\al \in S^1
\mbox{ for all } \al \in \pa .
\label{aat}
\end{equation}
We say that $(c,d)$ represents $\si$ if
there exist root vectors $\{X_\al\}_\pa$
which satisfy (\ref{repr}) for $\bsi = d$.
Some examples of markings appear in \cite[\S9]{tams}.
Lemma \ref{kim} explains why we have $c_\al \in S^1$
in (\ref{aat}), instead of
merely $c_\al \in \bct$.

Let $\aut_n(\fg)$ denote $\fg$-automorphisms of order $n$.
Given $\si \in \aut_n(\fg)$, there exists a Cartan involution
$\thq$ which commutes with $\si$, and
they both extend to $\fgc$.
This identifies $\aut_n(\fg)$ with commuting
$\aut_2(\fgc) \times \aut_n(\fgc)$ \cite[Cor.2.4]{tams}.
Every member of $\aut_n(\fg)$ can be represented
by a marking on $\pa$ \cite[Thm.1.3]{tams}.
Conversely, a marking represents a $\fg$-automorphism
if it satisfies an {\it admissible} condition
\cite[(1.5)]{tams}.
For example, suppose that $\mbox{rank } \fg = \mbox{rank } \fk$.
Let $\be \in \pa$ satisfy $a_\be =1$, where
$\{a_\al\}_\pa$ are given in Table 1 of \cite[Ch.X-\S5]{he}.
Then $\be$ is the
lowest root with respect to the simple system represented
by the Dynkin diagram $\pa \bsl \{\be\}$ of $\fgc$.
By Lemma \ref{kim},
we can construct $\si \in \aut(\fg)$ by assigning arbitrary
eigenvalues $c_\al \in S^1$ to $\fg_\al^\bc$ for all
$\al \in \pa \bsl \{\be\}$.
Then the highest root space $\fg_{-\be}^\bc$ has eigenvalue
$\prod_{\pa \bsl \{\be\}} c_\al^{a_\al}$, so
$\fg_\be^\bc$ has eigenvalue
$c_\be = (\prod_{\pa \bsl \{\be\}} c_\al^{a_\al})^{-1}$.
Hence the admissible condition for the case of
$\mbox{rank } \fg = \mbox{rank } \fk$ with $d=1$ is
\begin{equation}
 \prod_{\pa} c_\al^{a_\al} = 1 .
 \label{adc}
 \end{equation}
Slight modifications are needed for the admissible condition when
$\mbox{rank } \fg > \mbox{rank } \fk$ or $d \neq 1$.

\begin{proposition}
Let $\si \in \aut_n(\fg)$ be represented by a marking $(c,d)$
on $\pa$ with $d=1$.

\noindent {\rm (a)} $\,$ If $\mbox{rank } \fg = \mbox{rank } \fk$,
then $\si \in \inn(\fg)$.

\noindent {\rm (b)} $\,$ If $\mbox{rank } \fg > \mbox{rank } \fk$
and $\prod_\pa c_\al^{a_\al} = 1$, then $\si \in \inn(\fg)$.
\label{satu}
\end{proposition}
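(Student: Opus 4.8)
The plan is to realize $\si$ as $\Ad(t)$ for a suitably chosen $t$ in the torus $T=\exp\ft$ from the proof of Lemma \ref{kim}; since $T$ is connected, $\Ad(t)$ lies in the identity component $\inn(\fg)$ of $\aut(\fg)$, and we are done. I will use the following structural facts about painted diagrams (visible from Section 2 and established in \cite{tams}): $\pa$ has exactly $\mbox{rank } \fk+1$ vertices, they span $(\ft^\bc)^*$, the relation $\sum_\pa a_\al\al=0$ is their only linear relation up to scalar, and $\fgc$ is generated as a Lie algebra by $\ft^\bc$, by root vectors $X_{\pm\al}$ spanning $\fg_{\pm\al}^\bc\cap\fkc$ for $\al\in\Pi_\fk$, and by lowest weight vectors $X_\mu$ spanning $\fg_\mu^\bc\cap\fpc$ for $\mu\in\Pi_\fp$; these last vectors generate $\fpc$ as a $\fkc$-module, which is irreducible in cases (\ref{tig})(b,c) and a sum of two irreducible factors in case (\ref{tig})(a).

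First I would unwind $d=1$. As $\bsi=d$ fixes every vertex of $\pa$ and these span $(\ft^\bc)^*$, $\bsi$ is the identity on $(\ft^\bc)^*$, so $\si$ fixes $\ft^\bc$ pointwise; and by (\ref{repr}) together with Lemma \ref{kim}, $\si$ scales $X_\al$ by $c_\al$ and the opposite line $\fg_{-\al}^\bc\cap\fkc$ (resp.\ $\fg_{-\al}^\bc\cap\fpc$) by $c_\al^{-1}$, for each $\al\in\pa$. Next I would check that $\prod_\pa c_\al^{a_\al}=1$ in both parts: in part (b) this is the hypothesis, while in part (a) the equality $\mbox{rank } \fg=\mbox{rank } \fk$ makes every $\fg_\al^\bc$ one-dimensional, so the scalars by which $\si$ acts on the root spaces assemble into a group homomorphism $\la\colon\bz\De\to S^1$ (using $[\fg_\al^\bc,\fg_\be^\bc]=\fg_{\al+\be}^\bc$ whenever $\al,\be,\al+\be\in\De$, and $\la_{-\al}=\la_\al^{-1}$ from Lemma \ref{kim}); since $\la_\al=c_\al$ on $\pa$, we get $\prod_\pa c_\al^{a_\al}=\la\bigl(\sum_\pa a_\al\al\bigr)=\la(0)=1$.

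Then I would produce $t$. Write $c_\al=e^{i\ttt_\al}$ with $\ttt_\al\in\br$; since $\ft\subset\fk$, each $\al$ is purely imaginary on $\ft$, and the real-linear map $\ft\to\br^\pa$, $H\mapsto(-i\al(H))_{\al\in\pa}$, is injective because $\pa$ spans $(\ft^\bc)^*$. Its image therefore has dimension $\mbox{rank } \fk=|\pa|-1$, and since $\sum_\pa a_\al\al=0$ the image is exactly the hyperplane $\{(x_\al):\sum_\pa a_\al x_\al=0\}$. From $\prod_\pa c_\al^{a_\al}=1$ we have $\sum_\pa a_\al\ttt_\al\in2\pi\bz$; as the $a_\al$ have no common factor, there are integers $m_\al$ with $\sum_\pa a_\al m_\al=\tfrac1{2\pi}\sum_\pa a_\al\ttt_\al$, so after replacing each $\ttt_\al$ by $\ttt_\al-2\pi m_\al$ (which does not change $c_\al$) we may assume $\sum_\pa a_\al\ttt_\al=0$. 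Then some $H\in\ft$ satisfies $\al(H)=i\ttt_\al$ for all $\al\in\pa$; set $t=\exp H\in T$, so the character $\chi_\al$ of $T$ satisfies $\chi_\al(t)=e^{\al(H)}=c_\al$ on $\pa$.

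Finally I would verify $\Ad(t)=\si$. Both are $\bc$-linear automorphisms of $\fgc$. They agree on $\ft^\bc$ (each is the identity there, as $\ad H$ annihilates $\ft^\bc$); they agree on $X_{\pm\al}$ for $\al\in\Pi_\fk$, since $\Ad(t)$ scales a weight vector of weight $\nu$ by $\chi_\nu(t)$ and $\chi_{\pm\al}(t)=c_\al^{\pm1}$ matches the $\si$-scalars found above; and they agree on $X_\mu$ for $\mu\in\Pi_\fp$, both scaling by $\chi_\mu(t)=c_\mu$. Since these elements generate $\fgc$ as a Lie algebra and an automorphism is determined by its values on a generating set, $\Ad(t)=\si$ on $\fgc$, hence on $\fg$, and $\si=\Ad(t)\in\inn(\fg)$. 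I expect the delicate points to be the reduction to $\prod_\pa c_\al^{a_\al}=1$ in part (a) — which genuinely uses that $\mbox{rank } \fg=\mbox{rank } \fk$ forces one-dimensional root spaces, so that $\si$ is governed by an additive character — and the last step, where agreement of $\Ad(t)$ and $\si$ on the non-simple root spaces — which in case (\ref{tig})(c) may be two-dimensional — is obtained from the generation statement for $\fgc$ rather than by a direct comparison of eigenvalues.
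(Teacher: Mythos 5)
Your proof is correct, but it takes a genuinely different route from the paper's. You construct the inner automorphism explicitly: from $d=1$ you get that $\si$ fixes $\ft^\bc$ pointwise; you establish $\prod_\pa c_\al^{a_\al}=1$ in both parts (in (a) it is automatic because equal rank makes every $\fg_\al^\bc$ one--dimensional, so the eigenvalues define a character of the root lattice killing the relation $\sum_\pa a_\al\al=0$; in (b) it is the hypothesis); you then solve $e^{\al(H)}=c_\al$ for $H\in\ft$ using that $\pa$ has $\mbox{rank }\fk+1$ vertices spanning $(\ft^\bc)^*$ with that single relation, and conclude $\si=\exp(\ad_H)\in\inn(\fg)$ by comparing the two automorphisms on a generating set of $\fgc$. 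The paper instead first applies Theorem \ref{class} to the white subdiagram to obtain $\si|_\fk=\tau|_\fk$ with $\tau\in\inn(\fg)$, and then resolves the ambiguity in passing from $\fk$ to $\fg$ by a case split over (\ref{tig})(a,b,c): a path of markings joining $\tau^{-1}\si$ to the identity in case (a), the two--extensions result of \cite{tams} together with $\thq\in\inn(\fg)$ in case (b), and \cite[Prop.8.2]{tams} (membership in $\inn(\fgc)$ detected by $\prod_\pa c_\al^{a_\al}=\pm1$) in case (c). Your argument is more uniform and self--contained --- no case split, no external citations beyond standard structure theory --- at the price of invoking that $\sum_\pa a_\al\al=0$ is the unique relation among the vertices and that $\ft^\bc$, the simple root vectors of $\fkc$ and the lowest weight vectors of $\fpc$ generate $\fgc$; both are consistent with the paper's setup, and as a bonus your part (a) re-derives the admissibility condition (\ref{adc}). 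The one point to keep explicit is the unequal--rank case, where a two--dimensional $\fg_\al^\bc$ could a priori carry different $\si$-eigenvalues on its $\fkc$- and $\fpc$-components while $\Ad(\exp H)$ acts there by a single scalar; your appeal to generation handles this correctly, since the chosen generators all lie in spaces where both eigenvalues are pinned down and agree.
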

\begin{proof}
Let $\si \in \aut_n(\fg)$ be represented by a marking $(c,d)$
on $\pa$ with $d=1$.
The white vertices form the Dynkin diagram of $[\fk,\fk]$,
and $d=1$ on the white vertices.
So by Theorem \ref{class}, $\si|_{[\fk,\fk]} \in \inn([\fk,\fk])$,
namely $\si|_{[\fk,\fk]} = \prod_i \exp (\ad_{X_i})$
for some $X_i \in [\fk,\fk]$.

We regard $X_i$ as elements of $\fk$ and get
$\prod_i \exp (\ad_{X_i}) \in \inn(\fk)$.
It acts trivially on $\fz$ because
$\ad_{X_i}$ annihilates $\fz$. We have $d =1$ on the black vertices,
so $\si$ also acts trivially on $\fz$ \cite[Prop.4.5(a)]{tams}.
Therefore, $\si|_\fk = \prod_i \exp (\ad_{X_i})$.
Since $X_i$ are also elements of $\fg$, the expression
$\prod_i \exp (\ad_{X_i})$
extends to some $\tau \in \inn(\fg)$. Namely,
\begin{equation}
 \si|_\fk = \tau|_\fk \;;\; \tau \in \inn(\fg) .
 \label{geat}
 \end{equation}

 We now prove Proposition \ref{satu}(a), so
 suppose that $\mbox{rank } \fg = \mbox{rank } \fk$.
 There are two cases in (\ref{tig})(a,b), and
 we first consider (\ref{tig})(a).
By (\ref{geat}), there exists $\mu \in \aut(\fg)$ such that
\[ \si = \tau \mu \;,\;
\mu|_\fk = 1_\fk ,\]
where $1_\fk$ is the identity mapping on $\fk$.
Let $\be, \ga$ be the two black vertices of $\pa$.
They satisfy $a_\be = a_\ga =1$,
and they represent the lowest roots of $\fpc$.
There exists $z \in S^1$ such that $\mu$
has eigenvalues $z$ and $z^{-1}$ on
$\fg_\be^\bc$ and $\fg_\ga^\bc$ respectively.

Let $f: [0,1] \lra S^1$ be a path such that
$f(0)=1$ and $f(1) = z$.
For each $0 \leq t \leq 1$, we define a marking
$(c_t, d)$ on $\pa$ by $d=1$ and
\[ (c_t)_\be = f(t) \;,\; (c_t)_\ga = f(t)^{-1} \;,\;
(c_t)_\al = 1 \mbox{ for all $\al \in \pa \bsl \{\be,\ga\}$}.\]
By Lemma \ref{kim} and (\ref{adc}),
for all $t$, $(c_t,d)$ represents some $\mu_t \in \aut(\fg)$.
We have $\mu_0 = 1$ and $\mu_1 = \mu$.
So $\mu \in \inn(\fg)$ and
hence $\si \in \inn(\fg)$.

Next we consider the case (\ref{tig})(b).
There are two extensions of $\si|_\fk$
to $\aut(\fg)$ \cite[Cor.1.6]{tams}.
By (\ref{geat}), one of them is $\tau \in \inn(\fg)$,
so the other is $\tau \thq$. By Theorem \ref{helg},
we have $\thq \in \inn(\fg)$, so $\tau \thq \in \inn(\fg)$.
Since $\si$ is either $\tau$ or $\tau \thq$,
it follows that $\si \in \inn(\fg)$.
This proves Proposition \ref{satu}(a).

We now prove Proposition \ref{satu}(b),
so suppose that $\mbox{rank } \fg > \mbox{rank } \fk$.
There are two extensions of $\si|_\fk$ to $\aut(\fg)$
\cite[Cor.1.6]{tams}, and they are
$\si, \si \thq \in \aut(\fg)$.
By (\ref{geat}), one of $\{\si, \si \thq\}$ belongs to $\inn(\fg)$.
The other of $\{\si, \si \thq\}$ does not belong to $\inn(\fg)$
because $\thq \not\in \inn(\fg)$ by Theorem \ref{helg}.
We conclude that
\begin{equation}
 \sharp (\{\si, \si \thq\} \cap \inn(\fg)) = 1 .
 \label{ttn}
 \end{equation}

Suppose in addition that $\prod_\pa c_\al^{a_\al} = 1$.
Let $\be \in \pa$ be the unique black vertex.
Let $\si \thq$ be represented by the marking $(c',1)$.
Then $c_\al = c_\al'$ for all white vertices $\al$,
and $c_\be' = - c_\be$. Hence $\prod_\pa (c_\al')^{a_\al} = -1$.
Extend $\si$ and $\si \thq$ to $\bc$-linear involutions
on $\fg^\bc$.
By \cite[Prop.8.2]{tams},
\begin{equation}
 \si \in \inn(\fg^\bc) \;,\; \si \thq \not\in \inn(\fg^\bc) .
 \label{tts}
 \end{equation}

 The elements of $\inn(\fg)$ extend naturally to $\inn(\fg^\bc)$.
 So (\ref{ttn}) and (\ref{tts}) imply that $\si \in \inn(\fg)$.
 This proves Proposition \ref{satu}(b).
\end{proof}

\sk

While Proposition \ref{satu} provides a sufficient condition
for $\si \in \inn(\fg)$, the next proposition provides
a necessary condition.

\begin{proposition}
If $\si \in \aut_n(\fg)$ is represented by a marking $(c,d)$
on $\pa$ with $d \neq 1$,
then $\si \not\in \inn(\fg)$.
\label{onto}
\end{proposition}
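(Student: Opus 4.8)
The proof will run by contraposition: I will show that if $\si \in \inn(\fg)$ then the diagram automorphism $d = \bsi$ is trivial. Recall that the marking $(c,d)$ is attached to a Cartan involution $\thq$ commuting with $\si$ (which exists because $\si$ has finite order), to a $\si$-stable Cartan subalgebra $\fh \subset \fk$, and to a $\si$-stable simple system $\Pi_\fk$, and that $d$ is characterized by $\si\fg_\al^\bc = \fg_{d\al}^\bc$ for $\al \in \pa$. The first step is to pass to the group level: write $\inn(\fg) = \Ad(G)$ for a connected Lie group $G$ with Lie algebra $\fg$, so $\si = \Ad_g$ for some $g \in G$, and let $G = K\exp(\fp)$ be the global Cartan decomposition attached to $\thq$, with $g = k\exp(X)$, $k \in K$, $X \in \fp$. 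Since $\Ad_g$ commutes with $\thq$ we get $\Ad_{\exp(-2X)} = \Ad_{g^{-1}\thq(g)} = 1$; because $\ad_X$ (for $X \in \fp$) is diagonalizable with real eigenvalues and $\fg$ is semisimple, this forces $X = 0$. Hence $\si = \Ad_k$ with $k$ in the connected group $K$.

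From $\si = \Ad_k$ I next read off what $d$ must be. Restricting to $\fk$ gives $\si|_\fk = \Ad_k|_\fk \in \inn(\fk)$. Writing $\fk = [\fk,\fk] \oplus \fz$, every element of $\inn(\fk)$ fixes the center $\fz$ pointwise and restricts to an inner automorphism of $[\fk,\fk]$, hence of $[\fk,\fk]^\bc$. The white vertices of $\pa$ form exactly the Dynkin diagram of $[\fk,\fk]^\bc$, and by $\si\fg_\al^\bc = \fg_{d\al}^\bc$ the restriction of $d$ to the white vertices is the diagram automorphism induced by $\si|_{[\fk,\fk]^\bc}$; since that automorphism is inner, Theorem \ref{class}, applied to each simple summand, gives $d = 1$ on all white vertices. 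Moreover $\si$ fixes $\fz$ pointwise.

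It remains to treat the black vertices. In cases (\ref{tig})(b) and (\ref{tig})(c), $\fpc$ is an irreducible $\fkc$-module, so $\Pi_\fp$ is a single root; then $d$ fixes the unique black vertex, so $d = 1$, contradicting the hypothesis. In case (\ref{tig})(a) there are two black vertices $\be_+, \be_-$, the lowest weights of the two irreducible factors $\fpc = \fpc_+ \oplus \fpc_-$; either $d$ fixes both (again forcing $d = 1$) or $d$ interchanges them. Suppose $d$ interchanges them, so $\si\fg_{\be_+}^\bc = \fg_{\be_-}^\bc$. Since $\si$ normalizes $\fkc$, it carries the $\fkc$-submodule generated by $\fg_{\be_+}^\bc$, namely $\fpc_+$, to the $\fkc$-submodule generated by $\fg_{\be_-}^\bc$, namely $\fpc_-$; hence $\si\fpc_+ = \fpc_-$ and likewise $\si\fpc_- = \fpc_+$. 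Now $\dim\fz = 1$; fix $0 \neq Z \in \fz$. Then $\ad_Z$ commutes with the $\fkc$-action on $\fpc$ and is skew for the positive-definite form on $\fp$, so it acts by nonzero scalars $\la$ and $-\la$ on $\fpc_+$ and $\fpc_-$ (which are complex conjugate to one another), while $\ad_Z$ kills $\fkc$. Because $\si$ interchanges $\fpc_+$ and $\fpc_-$, a direct check gives $\ad_{\si Z} = \si\ad_Z\si^{-1} = -\ad_Z$ on all of $\fgc$, so $\si Z = -Z$ by faithfulness of $\ad$. But $\si = \Ad_k$ with $k$ in the connected group $K$ fixes the center of $\fk$ pointwise, so $\si Z = Z$; as $Z \neq 0$ this is impossible. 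Thus $d = 1$ in every case, which proves the proposition. The two places that need genuine care are the group-level reduction in the first paragraph — where semisimplicity of $\fg$ and the commuting of $\si$ with $\thq$ are both essential — and the case-(\ref{tig})(a) computation with the central element $Z$; the white-vertex analysis is immediate from Theorem \ref{class}, and cases (\ref{tig})(b),(c) are immediate from irreducibility of $\fpc$.
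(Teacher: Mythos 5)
Your proof is correct and follows essentially the same route as the paper's: reduce $\si\in\inn(\fg)$ to $\si=\Ad_k$ with $k$ in the connected group $K$, force $d=1$ on the white vertices via Theorem \ref{class}, and derive a contradiction from the action on the one-dimensional center $\fz$ of $\fk$ when $d$ swaps the two black vertices. The only differences are cosmetic: you reduce to $K$ via $g^{-1}\thq(g)=\exp(-2X)$ rather than via the normalizer $T=N_K(\fx)$ of a maximally compact Cartan subalgebra, and you prove $\si|_\fz=-1$ directly by the $\ad_Z$ computation where the paper cites \cite[Prop.4.5(b)]{tams}.
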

\begin{proof}
Let $G$ be a connected Lie group whose Lie algebra is $\fg$,
and let $K$ be its subgroup whose Lie algebra is $\fk$.
We have the Cartan decomposition $G = K \cdot \exp(\fp)$.
Let $\fx = \ft + \fa$ be a maximally compact Cartan subalgebra
of $\fg$,
where $\ft = \fx \cap \fk$ and $\fa = \fx \cap \fp$.
So $\ft$ is a Cartan subalgebra of $\fk$.
Let $N$ denote the normalizer with respect to the adjoint action,
and let $H = N_G(\fx)$ be the normalizer of $\fx$ in $G$.
Write $H = TA \subset K \cdot \exp(\fp)$,
where $T = N_K(\fx)$,
and $A$ is diffeomorphic to $\fa$.

Let $\si \in \aut_n(\fg)$ be represented by a marking $(c,d)$
on $\pa$ with $d \neq 1$.
Assume that $\si \in \inn(\fg)$, namely $\si = \Ad_g$,
and we seek a contradiction.
Since $\si$ stabilizes $\fx$, we have $g = ta \in TA$.
Here $\si$ stabilizes $\fk$, hence
\begin{equation}
\si = \Ad_t \;,\; t \in T .
\label{bak}
\end{equation}
There are two cases to consider.

{\it Case 1: $d \neq 1$ on the white vertices.}

Condition (\ref{bak}) implies that
$\si|_\fk \in \inn(\fk)$. By Theorem \ref{class}, this is
a contradiction because $d \neq 1$ on the white vertices.

{\it Case 2: $d \neq 1$ on the black vertices.}

Figure 1 shows that for
$\fs \fu(2,1)$, $\fs \fo(2,q)$ and $\fs \fp(2,\br)$,
we may have $d=1$ on the
white vertices and $d \neq 1$ on the black vertices.
So they are not covered by Case 1.
When $\pa$ has two black vertices, $\fk$ has a
1-dimensional center $\fz$.
Since $d \neq 1$ on the black vertices, we have $\si|_\fz =-1$
\cite[Prop.4.5(b)]{tams}.
But by (\ref{bak}), $\si = \Ad_t$
acts trivially on $\fz$, so this is a contradiction.

We have obtained contradictions in Cases 1 and 2.
Hence $\si \not\in \inn(\fg)$, which proves
Proposition \ref{onto}.
\end{proof}

\sk

The above two propositions deal with finite order automorphisms.
The next proposition says that it indeed suffices
to consider finite order representatives of
$\aut(\fg)/\inn(\fg)$.

\begin{proposition}
Each connected component of $\aut(\fg)$ contains
an element of finite order.
\label{nei}
\end{proposition}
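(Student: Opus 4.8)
The plan is to reduce the statement to a known structural fact about $\aut(\fg)$ for a real semisimple Lie algebra, namely that $\aut(\fg)$ has finitely many connected components and that each component is represented by an automorphism commuting with a fixed Cartan involution $\thq$. Concretely, I would start from the classical result (Cartan) that any $\si \in \aut(\fg)$ is conjugate, by an element of $\inn(\fg)$, to an automorphism $\si'$ that commutes with $\thq$; since $\inn(\fg)$ lies in the identity component, $\si$ and $\si'$ lie in the same connected component of $\aut(\fg)$. Thus it suffices to show that every $\si$ commuting with $\thq$ lies in the same component as some finite-order element.

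Next I would use the Cartan decomposition at the group level. Writing $\aut(\fg) = \inn(\fg) \rtimes (\text{something finite})$ is too strong in general, but what is true and sufficient is: the subgroup $\aut(\fg)_\thq$ of automorphisms commuting with $\thq$ stabilizes both $\fk$ and $\fp$, hence restricts to $\aut(\fk) \times (\text{action on }\fp)$; and $\inn(\fg) \cap \aut(\fg)_\thq$ has the same number of connected components as $\aut(\fg)$ itself does within $\aut(\fg)_\thq$... Rather than belabor this, the cleanest route is: given $\si$ commuting with $\thq$, I would connect $\si$ within its component to an automorphism preserving a maximally compact Cartan subalgebra $\fh = \ft + \fa$ and acting on $\fh$ by a lattice automorphism (this uses that the group of automorphisms fixing $\fh$ setwise meets every component of $\aut(\fg)_\thq$, a standard consequence of conjugacy of Cartan subalgebras of $\fk$). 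An automorphism preserving $\fh$ and acting on the root datum by a diagram-type symmetry can be multiplied by an inner automorphism $\Ad_{\exp(X)}$, $X \in \fh$, to normalize its eigenvalues on the simple root spaces; choosing the $X$ along a path deforms $\si$ within its component to an automorphism whose action on each simple root space is by a root of unity, hence of finite order.

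Alternatively — and this may be the slickest presentation to actually write — I would invoke the marking machinery already set up in the excerpt. By \cite[Thm.1.3]{tams}, every finite-order automorphism is represented by a marking $(c,d)$ on $\pa$; conversely the admissible markings exhaust (up to the identity component) all of $\aut(\fg)$. So given an arbitrary $\si \in \aut(\fg)$, pass to a representative commuting with $\thq$, read off its component via the induced $d = \bsi \in \aut(\pa)$ together with the class of $\prod_\pa c_\al^{a_\al}$ (when $\mathrm{rank}\,\fg > \mathrm{rank}\,\fk$), and then exhibit in that same data a marking with all $c_\al$ equal to roots of unity — which by Lemma \ref{kim}(c) and the admissibility discussion represents a genuine finite-order $\fg$-automorphism lying in the same component. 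The path $f:[0,1]\to S^1$ connecting the original $c_\al$ to a root of unity, exactly as in the proof of Proposition \ref{satu}(a), shows the two automorphisms lie in one connected component of $\aut(\fg)$.

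The main obstacle is the very first reduction: showing that conjugating $\si$ to commute with $\thq$ can be done within $\aut(\fg)$ by an element of $\inn(\fg)$ (so that the component is preserved), and that the further normalization of the Cartan subalgebra and of the root-space eigenvalues can likewise be carried out by inner automorphisms or by continuous paths inside $\aut(\fg)$. The eigenvalue normalization is the easy part — it is literally the path argument from Proposition \ref{satu}. The genuinely delicate point is handling the possible $\fa$-component: an automorphism $\Ad_{ta}$ with $a \in A = \exp(\fa)$ has real (non-unitary) eigenvalues on some root spaces, and one must deform $a$ to the identity along $A$, which is possible precisely because $A \cong \fa$ is contractible; I would make sure to note that this deformation stays inside $\aut(\fg)$ and moves $\si$ within its component to an automorphism of the form $\Ad_t$, $t \in T$, after which $T$ (being compact) has all eigenvalues on root spaces in $S^1$, and a final deformation inside the compact torus $T$ brings these to roots of unity.
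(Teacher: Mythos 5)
Your proposal is correct in outline, and its first reduction is in substance the same as the paper's, but the two proofs diverge in how much they do by hand. The paper's proof is two citations: by Murakami, $\aut(\fg)$ has a Cartan decomposition $\aut(\fg)={\mathcal K}{\mathcal P}$ with ${\mathcal K}=\aut(\fg)\cap\aut(\fu)$ compact (for $\fu=\fk+i\fp$) and ${\mathcal P}$ contractible, so the compact group ${\mathcal K}$ meets every component; and by a lemma of Heintze--Gro\ss, every connected component of a compact Lie group contains an element of finite order. Your step of conjugating $\si$ by an element of $\inn(\fg)$ so that it commutes with $\thq$ is exactly the statement that ${\mathcal K}$ meets every component (the $\thq$-commuting automorphisms are precisely those preserving $\fu$), so up to that point you have rederived the first citation. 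Where you differ is the second step: instead of invoking the general compact-group fact, you prove it by hand for $\aut(\fg)$ via a $\si$-stable maximally compact Cartan subalgebra, unit-modulus eigenvalues on root spaces, and a deformation of those eigenvalues to roots of unity along $S^1$. That route works, but to close it you still need to check (i) that the deformation can be done inside the admissible locus (the constraint $\prod_\pa c_\al^{a_\al}=1$ or its variants must be preserved along the path, as in the proof of Proposition \ref{satu}(a)), and (ii) that the resulting automorphism really has finite order, which follows because its action on $\fh$ permutes the finite set of roots and is therefore itself of finite order, so a suitable power acts trivially on $\fh$ and by roots of unity on the simple root spaces. Your worry about the $\fa$-component of $\Ad_{ta}$ is moot once $\si$ commutes with $\thq$: such a $\si$ already lies in the compact group ${\mathcal K}$, so all its eigenvalues are automatically of modulus one and no deformation along $A$ is needed. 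In short: your argument buys self-containedness at the cost of length and a couple of details to verify; the paper buys brevity at the cost of two external references.
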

\begin{proof}
Let $\fu = \fk + i \fp$.
Since $\fg$ and $\fu$ are real forms of $\fgc$,
we may regard $\aut(\fg)$ and $\aut(\fu)$
as subgroups of $\aut(\fgc)$ by
$\bc$-linear extensions.
Let ${\mathcal K} = \aut(\fg) \cap \aut(\fu)$.
Then $\aut(\fg)$ has maximally compact subgroup
${\mathcal K}$ with Cartan decomposition
$\aut(\fg) = {\mathcal K}{\mathcal P}$,
where ${\mathcal P}$ is diffeomorphic to a
vector space \cite[Thm.1]{mu}.
Since ${\mathcal K}$ is compact and intersects
every connected component of $\aut(\fg)$,
it follows that every connected component
of $\aut(\fg)$ contains an element of finite order
\cite[Lemma 2.16]{hg}.
\end{proof}

\sk

\noindent {\it Proof of Theorem \ref{main}:}

By Proposition \ref{nei},
it suffices to consider finite order representatives
of $\aut(\fg)/\inn(\fg)$.
They can be represented by markings on $\pa$
\cite[Thm.1.3]{tams}.

We first check that (\ref{them}) is a group homomorphism.
Suppose that $\si'$ and $\si''$ are respectively represented
by markings $(c',d')$ and $(c'',d'')$ with respect to
the root vectors $\{X_\al\}_\pa$, namely
$\si' X_\al = c_{d' \al}' X_{d' \al}$ and
$\si'' X_\al = c_{d'' \al}'' X_{d'' \al}$ by (\ref{repr}).
Let $\si = \si' \si''$. Then
\[ \si X_\al = \si' \si'' X_\al = \si'(c_{d'' \al}'' X_{d'' \al})
= c_{d' d'' \al}' c_{d'' \al}'' X_{d' d'' \al} .\]
So $\si$ is represented by $(c,d)$, where $d = d' d''$ and
$c_\al = c_\al' c_{(d')^{-1}\al}''$. It follows that
\begin{equation}
 \prod_\pa c_\al^{a_\al} =
\prod_\pa (c_\al')^{a_\al} \prod_\pa (c_{(d')^{-1}\al}'')^{a_\al}
= \prod_\pa (c_\al')^{a_\al} \prod_\pa (c_\al'')^{a_\al} .
\label{may}
\end{equation}
The last expression of (\ref{may}) is due to $a_\al = a_{d' \al}$
for all $\al \in \pa$.
By $d = d' d''$ and (\ref{may}), it follows that (\ref{them})
is a group homomorphism.

We shall check that (\ref{them}) is injective and surjective.
Recall that $\thq$ is a Cartan involution of $\fg$,
and $r$ is the order of $\bth \in \aut(\dy)$.
We have $r=1$ if $\mbox{rank } \fg = \mbox{rank } \fk$,
and $r=2$ if $\mbox{rank } \fg > \mbox{rank } \fk$.

Let $\si \in \aut_n(\fg)$ be represented by a marking $(c,d)$ on $\pa$.
By Proposition \ref{satu},
\[
\begin{array}{cl}
r=1: & d=1 \; \Longrightarrow \; \si \in \inn(\fg) ,\\
r=2: & (d, \prod_\pa c_\al^{a_\al}) =(1,1)
\; \Longrightarrow \; \si \in \inn(\fg) .
\end{array}
\]
It implies that the kernel of (\ref{them})
is trivial, so (\ref{them}) is injective.

Next we prove that (\ref{them}) is surjective.
For $r=2$,
if $1 \neq \prod_\pa c_\al^{a_\al} \in \bz_2$,
then the arguments of Proposition \ref{satu}(b) show that
$\si \not\in \inn(\fg)$.
So together with Proposition \ref{onto}, we have
\[
\begin{array}{cl}
r=1: & d \neq 1 \; \Longrightarrow \; \si \not\in \inn(\fg) ,\\
r=2: & (d, \prod_\pa c_\al^{a_\al}) \neq(1,1)
\; \Longrightarrow \; \si \not\in \inn(\fg) .
\end{array}
\]
It implies that (\ref{them}) is surjective.
Hence (\ref{them}) is a group isomorphism.$\hfill$$\Box$

\sk

By checking the list of $\aut(\fg)/\inn(\fg)$ in
\cite[Cor.2.15]{gu} for each $\fg$, we see that it is indeed
isomorphic to our $\aut(\pa) \times \bz_r$.
For example, consider the painted diagram of $\fs \fu(p,q)$
in Figure 1, where $p> 1$ or $q> 1$.
If $p \neq q$, then Figure 4(a) shows that $\aut(\pa) \cong \bz_2$.
If $p = q$, then Figure 4(b) shows that
$\aut(\pa) \cong \bz_2 \times \bz_2$.

\begin{figure}[h]
{\qquad \includegraphics[width=10cm,height=2.2cm]{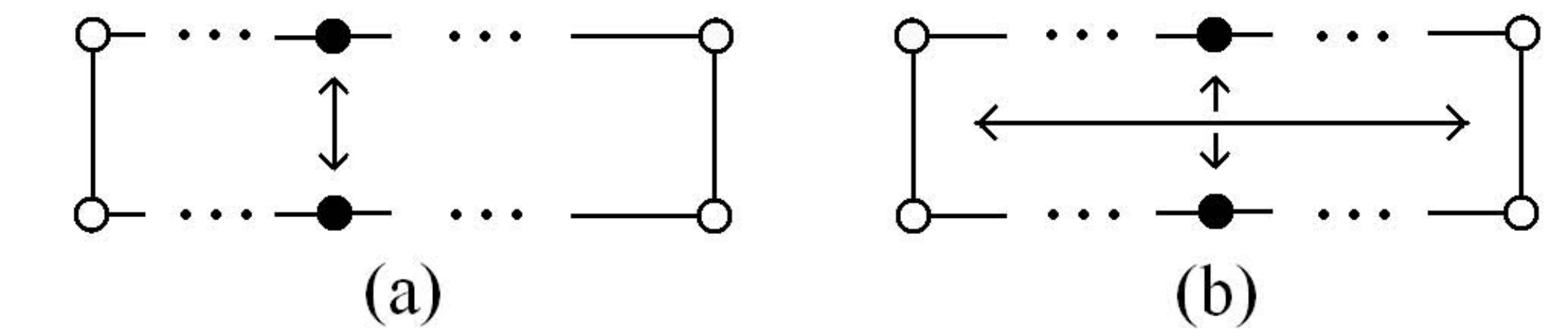}}
\caption{$\aut(\pa)$ for $\fs \fu(p,q)$.}
\end{figure}

Indeed the fourth and fifth items of \cite[Cor.2.15]{gu}
reveal $\aut(\fg)/\inn(\fg)$ as follows,
which verifies that $\aut(\fg)/\inn(\fg) \cong \aut(\pa)$.
\[
\begin{tabular}{|c|c|} \hline
 $\fg$ & $\aut(\fg)/\inn(\fg)$ \\ \hline
 $\fs \fu(p,q) , p \neq q$ & $\bz_2$ \\
 $\fs \fu(p,p)$ & $\bz_2 \times \bz_2$
\\ \hline
\end{tabular}
\]

\sk

\noindent{\it Proof of Corollary \ref{exte}:}

Decompose $\fg$ into
simple ideals $\fc_1 + ...  + \fc_m + \fg_1 + ... + \fg_n$,
where $\fc_i$ is complex and $\fg_i$ is a real form of
a complex simple Lie algebra \cite[Thm.6.94]{kn}.
Let $\dy_i$ denote the Dynkin diagram of $\fc_i$,
and let $\aut(\fc_i)$ denote the $\br$-linear
(not $\bc$-linear) automorphisms on $\fc_i$.
Then \cite[Thm.2.21]{gu}
\begin{equation}
 \aut(\fc_i)/\inn(\fc_i) \cong
\aut(\dy_i) \times \bz_2 .
\label{aam}
\end{equation}
Here $\bz_2$ is generated by a Cartan involution of $\fc_i$,
which is conjugation with respect to a maximally compact
subalgebra.

Next we consider $\fg_i$.
If $\fg_i$ is noncompact, then Theorem \ref{main} gives
\begin{equation}
 \aut(\fg_i)/\inn(\fg_i) \cong
\aut(\pa_i) \times \langle [\thq_i] \rangle .
\label{aan}
\end{equation}
If $\fg_i$ is compact, we let $\pa_i$ be the Dynkin diagram
of $\fg_i^\bc$.
Then (\ref{aan}) holds again \cite[Ch.IX Thm.5.4]{he},
where $[\thq_i] = [1]$ is trivial.

Let $\Ga$ be the group of all permutations
on isomorphic ideals among $\{\fc_i\}$
and $\{\fg_j\}$. Then
\begin{equation}
 \aut(\fg) \cong \prod_1^m \aut(\fc_i) \prod_1^n \aut(\fg_j)
\rtimes \Ga .
\label{aap}
\end{equation}
We can identify $\Ga$ with all permutations on isomorphic diagrams
among $\{\dy_i\}$ and $\{\pa_j\}$
(but do not permute $\dy_i$ with $\pa_j$). Then
\begin{equation}
 \aut(\dy) \times \aut(\pa)
= \prod_1^m \aut(\dy_i) \prod_1^n \aut(\pa_j) \rtimes \Ga .
\label{aaq}
\end{equation}
Corollary \ref{exte} follows from
(\ref{aam}), (\ref{aan}), (\ref{aap}) and (\ref{aaq}).
$\hfill$$\Box$


\newpage

\end{document}